\documentclass[11pt, twoside]{article}

\usepackage{amssymb}
\usepackage{amsmath}
\usepackage{amsthm}
\usepackage{color}
\usepackage{mathrsfs}

\allowdisplaybreaks

\pagestyle{myheadings}\markboth{\footnotesize\rm\sc
Feng Dai, Amiran Gogatishvili, Dachun Yang and Wen Yuan }
{\footnotesize\rm\sc Characterizations of Besov and Triebel-Lizorkin Spaces}

\textwidth=15cm
\textheight=21cm
\oddsidemargin 0.35cm
\evensidemargin 0.35cm

\parindent=13pt

\def\rr{{\mathbb R}}
\def\rn{{\mathbb{R}^n}}

\def\zz{{\mathbb Z}}

\def\nn{{\mathbb N}}

\def\cs{{\mathcal S}}

\def\fz{\infty }
\def\az{\alpha}
\def\bz{\beta}

\def\gz{{\gamma}}

\def\vz{\varphi}

\def\lf{\left}
\def\r{\right}

\def\ls{\lesssim}

\def\tr{\triangle}

\def\wz{\widetilde}
\def\wh{\widehat}

\def\loc{{\mathop\mathrm{\,loc\,}}}
\def\supp{\mathop\mathrm{\,supp\,}}

\def\Xint#1{\mathchoice
{\XXint\displaystyle\textstyle{#1}}%
{\XXint\textstyle\scriptstyle{#1}}%
{\XXint\scriptstyle\scriptscriptstyle{#1}}%
{\XXint\scriptscriptstyle\scriptscriptstyle{#1}}%
\!\int}
\def\XXint#1#2#3{{\setbox0=\hbox{$#1{#2#3}{\int}$ }
\vcenter{\hbox{$#2#3$ }}\kern-.6\wd0}}

\def\dashint{\Xint-}

\def\F{\dot{F}_{p,q}^\a}

\def\ga{\gamma}
\def\be{\b}

\def\F{\dot{F}_{p,q}^\az}
\def\B{\dot{B}_{p,q}^\az}

 \def\tr{{\triangle}}

\def\f{\frac}
\def\vi{\varphi}
\def\({\left(}
\def \){ \right)}

 \def\a{{\alpha}}
 \def\b{{\beta}}

 \def\t{{\theta}}

 \def\d{{\delta}}

 \def\CC{{\mathbb C}}

 \def\NN{{\mathbb N}}

 \def\RR{{\mathbb R}}
  
 \def\ZZ{{\mathbb Z}}





 \def\supp{\operatorname{supp}}

\newcommand{\we}{\wedge}

\newtheorem{theorem}{Theorem}[section]
\newtheorem{lemma}[theorem]{Lemma}

\theoremstyle{definition}
\newtheorem{remark}[theorem]{Remark}
\newtheorem{definition}[theorem]{Definition}
\renewcommand{\appendix}{\par
   \setcounter{section}{0}%
   \setcounter{subsection}{0}%
   \setcounter{subsubsection}{0}%
   \gdef\thesection{\@Alph\c@section}%
   \gdef\thesubsection{\@Alph\c@section.\@arabic\c@subsection}%
   \gdef\theHsection{\@Alph\c@section.}%
   \gdef\theHsubsection{\@Alph\c@section.\@arabic\c@subsection}%
   \csname appendixmore\endcsname
 }

\numberwithin{equation}{section}

\begin{document}

\arraycolsep=1pt

\title{\bf\Large Characterizations of  Besov and Triebel-Lizorkin Spaces via Averages on Balls
\footnotetext{\hspace{-0.35cm} 2010 {\it
Mathematics Subject Classification}. Primary 46E35;
Secondary 42B25, 42B35.
\endgraf {\it Key words and phrases.}  Besov space, Triebel-Lizorkin space,
average on ball, difference, Calder\'on reproducing formula.
\endgraf This project is supported by the National
Natural Science Foundation of China
(Grant Nos.~11171027, 11361020, 11411130053 and 11471042),
the Specialized Research Fund for the Doctoral Program of Higher Education
of China (Grant No. 20120003110003) and the Fundamental Research
Funds for Central Universities of China (Grant Nos.~2013YB60
and 2014KJJCA10). The research of A. Gogatishvili was also
partially supported by the grant P201/13/14743S of the Grant agency of the Czech Republic and RVO: 67985840.}}
\author{Feng Dai, Amiran Gogatishvili, Dachun Yang
and Wen Yuan\,\footnote{Corresponding author}}
\date{}
\maketitle

\vspace{-0.7cm}

\begin{center}
\begin{minipage}{13cm}
{\small {\bf Abstract}\quad
Let $\ell\in\mathbb{N}$
and $p\in(1,\infty]$. In this article, the authors prove that the sequence
$\{f-B_{\ell,2^{-k}}f\}_{k\in\mathbb{Z}}$
consisting of the differences between $f$ and the ball average $B_{\ell,2^{-k}}f$
characterizes the Besov space $\dot B^\alpha_{p,q}(\rn)$ with $q\in (0, \infty]$
and the Triebel-Lizorkin space $\dot F^\alpha_{p,q}(\rn)$ with $q\in (1,\infty]$
when the smoothness order $\alpha\in(0,2\ell)$.
More precisely, it is proved that $f-B_{\ell,2^{-k}}f$ plays the same role as the approximation to the
identity $\varphi_{2^{-k}}\ast f$ appearing in the definitions
of $\dot B^\alpha_{p,q}(\rn)$ and $\dot F^\alpha_{p,q}(\rn)$.
The corresponding results for inhomogeneous Besov and Triebel-Lizorkin spaces
are also obtained. These results, for the first time,
give a way to introduce Besov and Triebel-Lizorkin
spaces with any smoothness order in $(0, 2\ell)$
on spaces of homogeneous type, where $\ell\in{\mathbb N}$.
}
\end{minipage}
\end{center}

\vspace{0.2cm}

\section{Introduction}\label{s1}
\hskip\parindent
It is well known that the theory of function spaces with smoothness
is a central topic of the analysis on spaces of homogenous type in the sense
of Coifman and Weiss \cite{cw71, cw77}.
Recall that the first order Sobolev space on spaces of homogenous type
was originally introduced by Haj\l asz in \cite{h03} and later
Shanmugalingam \cite{sh00} introduced another kind of a first order
Sobolev space which has strong locality and hence is more suitable for
problems related to partial differential equations on spaces
of homogeneous type. Recently, Alabern et al. \cite{amv}
gave a way to introduce Sobolev spaces of any order bigger than $1$
on spaces of homogeneous type in spirit closer to the square function
and Dai et al. \cite{dgyy} gave several other ways, different from \cite{amv}, to
introduce Sobolev spaces of order $2\ell$
on spaces of homogeneous type in spirit closer to the pointwise
characterization as in \cite{h03}, where $\ell\in\nn:=\{1,2,\ldots\}$.
Later, motivated by \cite{amv},
Yang et al. \cite{yyz} gave a way to introduce Besov and Triebel-Lizorkin
spaces with smoothness order in $(0, 2)$
on spaces of homogeneous type. It is still an open question
how to introduce Besov and Triebel-Lizorkin
spaces with smoothness order not less than $2$
on spaces of homogeneous type.

In this article, we establish a characterization of Besov and Triebel-Lizorkin
spaces which can have any positive smoothness order on $\rn$ via the difference
between functions themselves and their ball
averages. Since the average operator used in this article is also well defined on
spaces of homogeneous type, this characterization can be used to introduce
Besov and Triebel-Lizorkin spaces with any positive smoothness order on any
space of homogeneous type and hence our results give an answer
to the above open question.

Let us now give a detailed description of the main ideas used in this article.
It is well known that a locally integrable function $f$ belongs to the Sobolev space
$W^{\az,p}(\rn)$, with $\az\in(0,1)$ and $p\in(1,\fz)$, if and only if $f\in L^p(\rn)$ and
$$s_\az(f):=\lf\{\int_0^\fz\lf[\dashint_{B(\cdot,\,t)}
|f(\cdot)-f(y)|\,dy \r]^2\,\frac{dt}{t^{1+2\az}}\r\}^{1/2}\in L^p(\rn)$$
(see, for example, \cite{w72,s71,t83,y03}).
Here and hereafter, $B(x,t)$ denotes an open ball with center at
$x\in \rn$ and radius  $t\in(0,\fz)$, and
$\dashint_{B(x,t)} f(y) \,dy$ denotes the \emph{integral average} of
$f\in L^1_\loc(\rn)$ on the ball $B(x,t)\subset \rn$, namely,
\begin{equation}\label{av-o-1}
\dashint_{B(x,t)} f(y) \,dy:= \frac1{|B(x,t)|}\int_{B(x,t)} f(y) \,dy=:B_tf(x).
\end{equation}
However, when $\az\in[1,\fz)$, $s_\az(f)$ is not able to characterize $W^{\az,p}(\rn)$, since, in
this case, $f\in L^1_\loc(\rn)$ and $\|s_\az(f)\|_{L^p(\rn)}<\fz$
imply that $f$ must be a constant function (see \cite[Section 4]{gkz} for more details).

Recently,  Alabern et al. \cite{amv} established a remarkable
characterization of Sobolev spaces of smooth order bigger than 1 and they proved that
a function $f\in W^{\az,p}(\rn)$, with $\az\in(0,2)$ and
$p\in(1,\fz)$, if and only if $f\in L^p(\rn)$ and the square function
$S_\az(f)\in L^p(\rn)$, where
$$S_\az(f)(\cdot):=\lf\{\int_0^\fz\lf|\dashint_{B(\cdot,\,t)}[f(\cdot)-f(y)]\,dy \r|^2\,
\frac{dt}{t^{1+2\az}}\r\}^{1/2},\quad f\in L^1_\loc(\rn)$$
(see \cite[Theorem 1 and p.\,591]{amv}).
Comparing $S_\az$ and $s_\az$, we see that the only difference exists in
that the absolute value  $|f(\cdot)-f(y)|$ in $s_\az(f)$ is replaced by
$f(\cdot)-f(y)$ in $S_\az(f)$. However, this slight change induces a  quite different
behaviors between $s_\az(f)$ and $S_\az$ when  characterizing Sobolev spaces.
The former characterizes Sobolev spaces only with smoothness order less than $1$, while
the later characterizes Sobolev spaces with smoothness order less than $2$.
Such a difference follows from the following observation:
for all $f\in C^2(\rn)$ and $t\in(0,\,1)$,
\begin{equation}\label{key}
\dashint_{B(x,\,t)}[f(x)-f(y)]\,dy= O(t^2),\quad x\in\rn,
\end{equation}
which follows from the Taylor expansion of $f$ up to order $2$:
$$f(y)=f(x)+\nabla f(x)\cdot(y-x)+ O(|y-x|^2),\quad x,\,y\in\rn;$$
in other words, the $S_\az$-function provides
smoothness up to order $2$. We point out that this phenomenon was first
observed by Wheeden in \cite{w69}
(see also \cite{w72}), and later independently
by Alabern, Mateu and Verdera \cite{amv}.

By means of the fact \eqref{key}, Alabern et al. \cite[Theorems 2 and 3]{amv}
also characterized Sobolev spaces of higher smoothness order and showed that
$f\in W^{\az,p}(\rn)$, with $\az\in [2N,2N+2)$, $N\in\nn$ and $p\in(1,\fz)$,
if and only if $f\in L^p(\rn)$ and there exist functions $g_1$, $\ldots$, $g_N\in L^p(\rn)$
such that $S_\az(f,g_1,\ldots,g_N)\in L^p(\rn)$, where
$$S_\az(f,g_1,\ldots,g_N)(\cdot):=\lf\{\int_0^\fz \lf|\dashint_{B(\cdot,t)} t^{-\az} R_N(y,\cdot)\,dy\r|^2\,\frac{dt}t\r\}^{1/2}$$
with
\begin{eqnarray}\label{e1.1}
R_N(y;\,\cdot)&&:= f(y)-f(\cdot)-\sum_{j=1}^{N}g_j(\cdot)|y-\cdot|^{2j}
\end{eqnarray}
when $\az\in(2N,2N+2)$, and
\begin{eqnarray}\label{e1.2}
R_N(y;\,\cdot)&&:= f(y)-f(\cdot)-\sum_{j=1}^{N-1}g_j(\cdot)|y-\cdot|^{2j}-B_tg_N(\cdot)|y-\cdot|^{2N}
\end{eqnarray}
when $\az=2N$. Indeed, the function $g_j$ was proved in
\cite[Theorems 2 and 3]{amv} to equal to $\frac1{L_j}\Delta^jf$
almost everywhere, where $L_j:=\Delta^j|x|^{2j}$ for $j\in\{1,\,\ldots,\,N\}$.
As the corresponding results for Triebel-Lizorkin spaces,
Yang et al. \cite[Theorems, 1.1, 1.3 and 4.1]{yyz} further proved that, for all
$\az\in(2N,2N+2)$, $N\in\nn$
and $p\in (1,\fz]$, the Besov space $\dot B^{\az}_{p,q}(\rn)$ with $q\in (0,\fz]$
and the Triebel-Lizorkin space $\dot F^{\az}_{p,q}(\rn)$ with $q\in (1,\fz]$
can be characterized via the function
\begin{equation}\label{e1.3}
 S_{\az,\,q}(f)(x):=\lf\{\sum_{k\in\zz} 2^{k\az q}
 \lf|\dashint_{B(x,\,2^{-k})}\wz R_N(y;\,x)\,dy\r|^q \r\}^{1/q}, \quad x\in\rn,
\end{equation}
where, for all $x,\,y\in\rn$ and $k\in\zz$,
\begin{eqnarray}\label{e1.4}
\wz R_N(y;\,x):= f(y)-f(x)-\sum_{j=1}^{N}\frac 1{L_j}\Delta^jf(x)|y-x|^{2j}.
\end{eqnarray}
It is an open question, posed in \cite[Remark 4.1]{yyz}, whether this exists a
corresponding characterization for $\dot B^{\az}_{p,q}(\rn)$ and $\dot F^{\az}_{p,q}(\rn)$
when $\az=2N$ with $N\in\nn$. Moreover, only when $\az\in (0,2)$,
\cite[Theorems 1.1 and 4.1]{yyz} provide a way to introduce Besov and
Triebel-Lizorkin spaces with smoothness order $\az$ on spaces of
homogeneous type.

Via higher order differences,
Triebel \cite{t10,t11} and Haroske and Triebel
\cite{ht11,ht13} obtained another characterization of Sobolev spaces with order
bigger than $1$ on $\rn$ without involving derivatives.
Recall that, for $\ell\in\nn$, the \emph{$\ell$-th order (forward)
difference operator} $\wz\Delta^\ell_h$ with $h\in\rn$ is defined by setting,
for all functions $f$ and $x\in\rn$,
$$\wz\Delta^1_hf(x):=f(x+h)-f(x),\qquad \wz\Delta^\ell_h:=
\wz\Delta^{1}_h\wz\Delta^{\ell-1}_h,\quad \ell\ge 2.$$
By means of  $\wz\Delta^\ell_hf$, Triebel \cite{t10,t11} and Haroske and Triebel
\cite{ht11,ht13} proved that
the Sobolev space $W^{\ell,p}(\rn)$ with $\ell\in\nn$
and $p\in(1,\fz)$ can be characterized by a pointwise inequality in the spirit
of Haj\l asz \cite{h96} (see also Hu \cite{h03} and Yang \cite{y03}). Recall that the difference
$\wz\Delta^\ell_hf$ can also be used to characterize Besov spaces and
Triebel-Lizorkin spaces with smoothness order no more than $\ell$.
We refer the reader to Triebel's monograph \cite[Section 3.4]{t92}
for these difference characterizations of Besov and Triebel-Lizorkin spaces;
see also \cite[Section 3.1]{rs}.
However, it is still unclear how to define higher than $1$ order differences on
spaces of homogeneous type.

On the other hand, recall that the averages of a
function $f$ can be used to approximate $f$ itself
in some function spaces; see, for example, \cite{dp,bdd}.
Motivated by \eqref{key} and
the pointwise characterization of Sobolev spaces with smoothness order no more than 1
(see Haj\l asz \cite{h96}, Hu \cite{h03} and Yang \cite{y03}),
the authors established in \cite{dgyy} some pointwise
characterizations of Sobolev spaces
with smoothness order $2\ell$ on $\rn$ via ball averages
of $f$, where $\ell\in\nn$. To be precise,
as the higher order variants of $B_t$ in \eqref{av-o-1},
for all $\ell\in\nn$, $t\in(0,\fz)$ and $x\in\rn$, we define the
\emph{$2\ell$-th order average operator   $B_{\ell,t}$}
by setting, for all $f\in L_\loc^1(\rn)$ and $x\in\rn,$
\begin{equation}\label{1:edu}
B_{\ell,t}f(x) := -\frac{2}{\binom{2\ell}{\ell} }
\sum_{j=1}^\ell (-1)^j \binom {2\ell}{\ell-j} B_{jt} f(x),
\end{equation}
here and hereafter, $\binom {2\ell}{\ell-j}$ denotes the binomial coefficients.
Obviously, $B_{1,t}f=B_tf$. Moreover, it was observed in \cite{dgyy} that
$f-B_{\ell, t} f$ is a $2\ell$-th order central
difference of the   function $t \mapsto B_t f(x)$ with step $t$ at the origin,
namely, for all $\ell\in\nn$, $t\in(0,\fz)$, $f\in L_\loc^1(\rn)$ and $x\in\rn$,
\begin{equation}\label{diff}
f(x)-B_{\ell, t} f(x) =   \f {(-1)^{\ell}  }{\binom {2\ell} \ell}   \tr_t ^{2\ell} g (0)
\end{equation}
with
\begin{equation}\label{g-d}
g(t):=\begin{cases}
B_t f(x),\quad &t\in(0,\fz); \\
f(x),\quad &t=0;\\
B_{-t}f(x),\quad &t\in(-\fz,0).
\end{cases}
\end{equation}
Here and hereafter, for all functions $h$ on $\mathbb{R}$
and $\theta,\, t\in \rr$, let $T_\t h(t) := h(t+\t)$, and
the \emph{central difference operators $\tr_t^r$} are defined by setting
\begin{align*}
\tr_\t^1h(t)&:=\tr_\t h (t):=h\lf(t+\f \t 2\r) -h\lf(t-\f \t2\r)=\lf(T_{\t/2}  -T_{-\t/2}\r)h(t),\\
   \tr_\t^r h (t) &:= \tr_\t (\tr_\t^{r-1} h)(t) =\sum_{j=0}^r \binom{r}
   j (-1)^j h\lf( t + \f {r\t}2-j\t\r),\quad r\in\{2,3,\ldots\}.
\end{align*}
The authors proved in \cite{dgyy} that $f\in W^{2\ell,p}(\rn)$, with $\ell\in\nn$ and $p\in (1,\fz)$, if and only if
$f\in L^p(\rn)$ and  there exist a non-negative $g\in L^p(\rn)$ and a positive constant $C$ such that $|f(x)-B_{\ell,t}f(x)|\le Ct^{2\ell}\,g(x)$
for all $t\in(0,\fz)$ and almost every $x\in \rn$. Various variants of this pointwise
characterization were also presented in \cite{dgyy}. Recall that centered averages or their combinations were used
to measure the smoothness and to characterize the $K$-functionals in \cite{dd04,dr99,di93}.

Comparing the difference  $f-B_{\ell,t}f$ with the
usual difference $\wz\Delta^{2\ell}_hf$, we find that
the former has an advantage that it
involves  only  averages of $f$ over balls, and hence
can be easily generalized to any space of homogeneous type,
whereas the difference operator  $\wz\Delta^{2\ell}_hf$ can not.
We can also see their difference via \eqref{diff}. Indeed, it follows from
\eqref{diff} that
$f-B_{\ell, t} f$ is a $2\ell$-th order central difference of a function $g$ and the
parameter related to such a difference is the radius $t\in(0,\fz)$ of the ball $B(x,t)$ with $x\in\rn$,
while the parameter related to $\wz\Delta^{2\ell}_hf$ is $h\in \rn$, which also curbs
the extension of $\wz\Delta^{2\ell}_hf$ to spaces of homogeneous type.

Although there exist differences between $f-B_{\ell,t}f$ and
the usual difference $\wz\Delta^{2\ell}_hf$,
the characterizations of $W^{2\ell,p}(\rn)$ via $f-B_{\ell,t}$ obtained
in \cite{dgyy} imply that, in some sense, $f-B_{\ell,t}f$ also plays
the role of $2\ell$-order derivatives. Therefore, it is natural to ask
\emph{whether we can use  $f-B_{\ell,t}$ to characterize
Besov and Triebel-Lizorkin spaces
with smoothness order less than $2\ell$ or not}.

The main purpose of this article is to answer this question.
To this end, we first recall some basic notions.
Let $\zz_+:=\nn\cup\{0\}$ and $\mathcal{S}(\rn)$ denote the collection of all
\emph{Schwartz functions} on $\rn$, endowed
with the usual topology, and $\mathcal{S}'(\rn)$ its \emph{topological dual}, namely,
the collection of all bounded linear functionals on $\mathcal{S}(\rn)$
endowed with the weak $\ast$-topology.
Let $\cs_\fz(\rn)$ be the set of all Schwartz functions $\vz$ such that
$\int_\rn x^\gz \vz(x)\,dx=0$ for all $\gz\in \zz_+^n$, and
$\cs'_\fz(\rn)$ its topological dual.
For all $\az\in\zz_+^n$, $m\in\zz_+$ and $\vz\in\cs(\rn)$, let
$$\|\vz\|_{\az,m}:=\sup_{x\in\rn,\,|\bz|\le |\az|}(1+|x|)^m |\partial^\bz \vz(x)|.$$

For all $\vz\in\cs'_\fz(\rn)$,
we use $\widehat\vz$ to denote its \emph{Fourier transform}.
For any $\vz\in \cs(\rn)$ and $t\in(0,\fz)$, we let $\vz_t(\cdot):=t^{-n}\vz(\cdot/t)$.

For all $a\in\rr$, $\lfloor a\rfloor $ denotes the \emph{maximal integer} no more than $a$.
For any $E\subset\rn$, let $\chi_E$ be its \emph{characteristic function}.

We now recall the notions of
Besov and Triebel-Lizorkin spaces; see \cite{t83,t92,FJ90,ysy}.

\begin{definition}\label{d2.1}
Let $\az\in(0,\,\fz)$, $p,\,q\in(0,\,\fz]$ and $\vz\in\cs(\rn)$ satisfy that
\begin{equation}\label{con}
{\rm supp}\,\widehat\vz\subset \{\xi\in\rn:\ 1/2\le|\xi|\le2\}\ \mathrm{and}\
|\widehat\vz(\xi)|\ge {\rm constant}>0 \ \mathrm{if}\ 3/5\le|\xi|\le5/3.
\end{equation}

(i) The {\it homogenous Besov space} $
\dot B^\az_{p,\,q}(\rn)$ is defined as the collection of all
$f\in\cs'_\fz(\rn)$ such that $\|f\|_{\dot B^\az_{p,\,q}(\rn)}<\fz$, where
\begin{equation*}
\|f\|_{   \dot B^\az_{p,\,q}(\rn)}:=\lf[\sum_{k\in\zz}2^{k\az q}
  \|\vz_{2^{-k}}\ast
f\|_{L^p(\rn)}^q\r]^{1/q}
\end{equation*}
with the usual modifications made when $p=\fz$ or $q=\fz$.

(ii) The {\it homogenous Triebel-Lizorkin space} $
\dot F^\az_{p,\,q}(\rn)$ is defined as the collection of all
$f\in\cs'_\fz(\rn)$ such that $\|f\|_{  \dot F^\az_{p,\,q}(\rn)}<\fz$,
where, when $p\in(0,\fz)$,
\begin{equation*}
\|f\|_{\dot F^\az_{p,\,q}(\rn)}:=\lf\|\lf[\sum_{k\in\zz}2^{k\az q}
  |\vz_{2^{-k}}\ast
f|^q\r]^{1/q}\r\|_{L^p(\rn)}
\end{equation*}
with the usual modification made when $q=\fz$,
and
\begin{equation*}
\|f\|_{ \dot F^\az_{\fz,\,q}(\rn)}:=\sup_{x\in\rn}
\sup_{m\in\zz}\lf\{\dashint_{B(x,\,2^{-m})}\sum_{k=m}^\fz2^{k\az q}
 |\vz_{2^{-k}}\ast
f(y)|^q\,dy\r\}^{1/q}
\end{equation*}
with the usual modification made when $q=\fz$.
\end{definition}

It is well known that the spaces
$\dot B^\az_{p,\,q}(\rn)$ and
$\dot F^\az_{p,\,q}(\rn)$ are independent
of the choice of functions $\vz$ satisfying \eqref{con};
see, for example, \cite{fjw91}.

We also recall the corresponding inhomogeneous spaces.

\begin{definition}\label{d2.2}
Let $\az\in(0,\,\fz)$, $p,\,q\in(0,\,\fz]$, $\vz\in\cs(\rn)$ satisfy \eqref{con}
and $\Phi\in \cs(\rn)$ satisfy that
\begin{equation}\label{con2}
{\rm supp}\,\widehat\Phi\subset \{\xi\in\rn:\ |\xi|\le2\}\ \mathrm{and}\
|\widehat\Phi(\xi)|\ge {\rm constant}>0 \ \mathrm{if}\ |\xi|\le5/3,
\end{equation}

(i) The {\it inhomogeneous Besov space} $
B^\az_{p,\,q}(\rn)$ is defined as the collection of all
$f\in\cs'(\rn)$ such that $\|f\|_{B^\az_{p,\,q}(\rn)}<\fz$, where
\begin{equation*}
\|f\|_{B^\az_{p,\,q}(\rn)}:=\lf[\sum_{k\in\zz_+} 2^{k\az q}
  \|\vz_{2^{-k}}\ast
f\|_{L^p(\rn)}^q\r]^{1/q}
\end{equation*}
with the usual modifications made when $p=\fz$ or $q=\fz$, where, when $k=0$,
$\vz_{2^{-k}}$ is replaced by $\Phi$.

(ii) The {\it inhomogeneous Triebel-Lizorkin space} $
F^\az_{p,\,q}(\rn)$ is defined as the collection of all
$f\in\cs'(\rn)$ such that $\|f\|_{F^\az_{p,\,q}(\rn)}<\fz$,
where, when $p\in(0,\fz)$,
\begin{equation*}
\|f\|_{F^\az_{p,\,q}(\rn)}:=\lf\|\lf[\sum_{k\in\zz_+}2^{k\az q}
  |\vz_{2^{-k}}\ast
f|^q\r]^{1/q}\r\|_{L^p(\rn)}
\end{equation*}
with the usual modification made when $q=\fz$,
and
\begin{equation*}
\|f\|_{F^\az_{\fz,\,q}(\rn)}:=\sup_{x\in\rn}
\sup_{m\in\zz_+}\lf\{\dashint_{B(x,\,2^{-m})}\sum_{k=m}^\fz2^{k\az q}
 |\vz_{2^{-k}}\ast
f(y)|^q\,dy\r\}^{1/q}
\end{equation*}
with the usual modification made when $q=\fz$,  where, when $k=0$,
$\vz_{2^{-k}}$ is replaced by $\Phi$.
\end{definition}

It is also well known that the spaces $
B^\az_{p,\,q}(\rn)$ and $
F^\az_{p,\,q}(\rn)$ are independent of the choice of functions $\vz$ and $\Phi$ satisfying
\eqref{con} and \eqref{con2}, respectively;
see, for example, \cite{t83}.

As the main result of this article, we prove that the difference
$f-B_{\ell,2^{-k}}f$ with $k\in\zz$ plays the same role of the approximation to
the identity $\vz_{2^{-k}}\ast f$ in the definitions of Besov and Triebel-Lizorkin
spaces in the following sense.

\begin{theorem}\label{t-bf}
Let $\ell\in\nn$ and $\a\in (0, 2\ell)$.

{\rm (i)} Let $p\in(1,\fz]$ and $q\in(0,\infty]$.
If $f\in \B(\rn)$, then there exists
$g\in L_\loc^1(\rn)\cap\cs'_\fz(\rn)$ such that $g=f$ in $\cs'_\fz(\rn)$
and $|||g|||_{\B(\rn)}\le C\|f\|_{\B(\rn)}$
for some positive constant $C$ independent of $f$,
where
$$|||g|||_{\B(\rn)}:=\left\{\sum_{k\in\zz}
2^{k\az q}\|g-B_{\ell,2^{-k}}g\|_{L^p(\rn)}^q\right\}^{1/q}.$$

Conversely, if $f\in L_\loc^1(\rn)\cap\cs'_\fz(\rn)$ and
$|||f|||_{\B(\rn)}<\fz$, then $f\in \B(\rn)$ and $\|f\|_{\B(\rn)}
\le C|||f|||_{\B(\rn)}$ for some positive constant $C$ independent of $f$.

{\rm (ii)}  Let $p\in(1,\fz]$ and $q\in(1,\infty]$. If $f\in \F(\rn)$, then there exists
$g\in L_\loc^1(\rn)\cap\cs'_\fz(\rn)$ such that $g=f$ in $\cs'_\fz(\rn)$
and $|||g|||_{\F(\rn)}\le C\|f\|_{\F(\rn)}$
for some positive constant $C$ independent of $f$, where, when $p\in(1,\fz)$,
$$|||g|||_{\F(\rn)}:=\left\|\lf\{\sum_{k\in\zz}
2^{k\az q}|g-B_{\ell,2^{-k}}g|^q\right\}^{1/q}\right\|_{L^p(\rn)}$$
and, when $p=\fz$,
$$|||g|||_{\dot{F}^\az_{\fz,q}(\rn)}:=\sup_{x\in\rn}
\sup_{m\in\zz}\lf\{\dashint_{B(x,\,2^{-m})}\sum_{k=m}^\fz2^{k\az q}
 |g(y)-B_{\ell,2^{-k}}g(y)|^q\,dy\r\}^{1/q}.$$

Conversely, if $f\in L_\loc^1(\rn)\cap\cs'_\fz(\rn)$ and
$|||f|||_{\F(\rn)}<\fz$, then $f\in \F(\rn)$ and $\|f\|_{\F(\rn)}
\le C|||f|||_{\F(\rn)}$ for some positive constants $C$ independent of $f$.
\end{theorem}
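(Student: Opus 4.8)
\emph{Step 1: reduction to a Fourier multiplier.} The plan is to reduce everything to properties of one multiplier. Writing $\psi:=|B(0,1)|^{-1}\chi_{B(0,1)}$ we have $B_tf=\psi_t\ast f$, so $I-B_{\ell,t}$ is convolution with $K_{\ell,t}:=t^{-n}K_\ell(\cdot/t)$, where $K_\ell:=\dz_0+\frac2{\binom{2\ell}\ell}\sum_{j=1}^\ell(-1)^j\binom{2\ell}{\ell-j}\,j^{-n}\psi(\cdot/j)$ is a compactly supported finite measure with $\|K_\ell\|_{\rm TV}\ls1$. First I would compute its Fourier transform: using the elementary identity $\sum_{j=-\ell}^{\ell}(-1)^j\binom{2\ell}{\ell-j}e^{ij\t}=4^\ell\sin^{2\ell}(\t/2)$ together with the evenness of $\wh\psi$, this should collapse to
\[
\wh{K_\ell}(\xi)=:\Theta_\ell(\xi)=\frac{4^\ell}{\binom{2\ell}\ell}\dashint_{B(0,1)}\sin^{2\ell}\!\Big(\frac{\xi\cdot y}2\Big)\,dy,\qquad\xi\in\rn,
\]
so that $\wh{(I-B_{\ell,t})f}=\Theta_\ell(t\cdot)\,\wh f$. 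Three consequences are decisive: (a) $\Theta_\ell$ is real-analytic and $\Theta_\ell(\xi)>0$ for every $\xi\ne0$ (the integrand cannot vanish a.e.\ on $B(0,1)$ unless $\xi=0$); (b) since $\sin^{2\ell}\tau=\tau^{2\ell}+O(\tau^{2\ell+2})$, one has $\Theta_\ell(\xi)=c_{n,\ell}|\xi|^{2\ell}+O(|\xi|^{2\ell+2})$ with $c_{n,\ell}>0$, i.e.\ $\Theta_\ell$ vanishes to order exactly $2\ell$ at the origin, equivalently $\int_\rn y^\gz\,dK_\ell(y)=0$ for $|\gz|\le2\ell-1$, equivalently $I-B_{\ell,t}$ annihilates every polynomial of degree $\le2\ell-1$; (c) $\Theta_\ell(\xi)\to1$ as $|\xi|\to\fz$, by Riemann--Lebesgue. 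Properties (a)--(c) play the role, for this non-Schwartz kernel, of the defining features of a Littlewood--Paley symbol.

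\emph{Step 2: from the space to $|||\cdot|||$.} Fix $\vz$ as in \eqref{con} and $\wz\vz\in\cs(\rn)$ with $\sum_{j\in\zz}\wh{\wz\vz}(2^{-j}\xi)\wh\vz(2^{-j}\xi)=1$ on $\rn\setminus\{0\}$. Given $f\in\B(\rn)$ (resp.\ $\F(\rn)$), I would take a locally integrable representative $g$ for which $g=\sum_j\vz_{2^{-j}}\ast\wz\vz_{2^{-j}}\ast g$ holds in $\cs'(\rn)$ modulo polynomials of degree $\le\lfloor\az\rfloor$; since $\lfloor\az\rfloor\le2\ell-1$, those polynomials are killed by $I-B_{\ell,2^{-k}}$ and drop out, so $g-B_{\ell,2^{-k}}g=\sum_j K_{\ell,2^{-k}}\ast\vz_{2^{-j}}\ast(\wz\vz_{2^{-j}}\ast g)$. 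The only estimate to prove is
\[
\|K_{\ell,2^{-k}}\ast\vz_{2^{-j}}\|_{L^1(\rn)}\ls\min\{2^{2\ell(j-k)},\,1\},
\]
the bound $1$ (for $j>k$) coming from $\|K_\ell\|_{\rm TV}\ls1$ and the bound $2^{2\ell(j-k)}$ (for $j\le k$) from a Taylor expansion exploiting the $2\ell$ vanishing moments of $K_\ell$. Because $\az\in(0,2\ell)$, the weighted quantities $2^{(k-j)\az}\|K_{\ell,2^{-k}}\ast\vz_{2^{-j}}\|_{L^1}$ decay geometrically in $|j-k|$; for Besov a Schur/Young estimate for series — for $q\le1$ using subadditivity of $t\mapsto t^q$ — then gives $|||g|||_{\B(\rn)}\ls\|f\|_{\B(\rn)}$ for all $q\in(0,\fz]$, and for Triebel--Lizorkin with $p<\fz$ one replaces $K_{\ell,2^{-k}}\ast\vz_{2^{-j}}\ast h$ pointwise by $\min\{2^{2\ell(j-k)},1\}\,\cm h$, sums in $k$, and closes with the Fefferman--Stein vector-valued maximal inequality — which is exactly why $q\in(1,\fz]$ is needed there. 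The Besov case $p=\fz$ is covered verbatim with $L^p$ replaced by $L^\fz$; only the Triebel--Lizorkin case $p=\fz$ needs the Carleson-measure reformulation of $\dot F^\az_{\fz,q}$, splitting the sum over $j$ at the box scale and using $\dot F^\az_{\fz,q}\hookrightarrow\dot B^\az_{\fz,\fz}$ together with $\sum_{j<m}2^{(\az-2\ell)(m-j)q}<\fz$ for the low-frequency piece.

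\emph{Step 3: from $|||\cdot|||$ to the space --- the adapted Calder\'on formula.} Here I would first manufacture a synthesis kernel from $\Theta_\ell$. Pick $\oz\in C^\fz_c(\rn)$ supported in $\{1/2\le|\xi|\le2\}$, nonnegative and positive on $\{2^{-1/4}\le|\xi|\le2^{1/4}\}$, and set $D(\xi):=\sum_{k\in\zz}\oz(2^{-k}\xi)\Theta_\ell(2^{-k}\xi)$. For $\xi\ne0$ only finitely many terms are nonzero, $\Theta_\ell>0$, and at least one $\oz(2^{-k}\xi)>0$, so $D\in C^\fz(\rn\setminus\{0\})$ is bounded above and below; hence $\wh\phi:=\oz/D\in C^\fz_c(\{1/2\le|\xi|\le2\})$, $\phi\in\cs(\rn)$, and $\sum_{k\in\zz}\wh\phi(2^{-k}\xi)\,\Theta_\ell(2^{-k}\xi)=1$ on $\rn\setminus\{0\}$. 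Standard arguments (cf.\ \cite{fjw91,ysy}) then upgrade this to the reproducing formula $f=\sum_{k\in\zz}\phi_{2^{-k}}\ast(f-B_{\ell,2^{-k}}f)$ in $\cs'_\fz(\rn)$. Since $\supp\wh\phi$ and $\supp\wh\vz$ are compact annuli, $\vz_{2^{-k}}\ast\phi_{2^{-j}}=0$ unless $|j-k|\le C_0$, whence $\vz_{2^{-k}}\ast f=\sum_{|j-k|\le C_0}\vz_{2^{-k}}\ast\phi_{2^{-j}}\ast(f-B_{\ell,2^{-j}}f)$ is a finite sum and $\|\vz_{2^{-k}}\ast f\|_{L^p}\ls\sum_{|j-k|\le C_0}\|f-B_{\ell,2^{-j}}f\|_{L^p}$. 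Multiplying by $2^{k\az}$ and taking $\ell^q$ in $k$ (Besov), or passing to a pointwise bound via $\cm$ and then taking $L^p$ and $\ell^q$ with Fefferman--Stein (Triebel--Lizorkin, $p,q\in(1,\fz]$), gives $\|f\|_{\B(\rn)}\ls|||f|||_{\B(\rn)}$, resp.\ $\|f\|_{\F(\rn)}\ls|||f|||_{\F(\rn)}$; the case $p=\fz$ is again handled via tent spaces/Carleson measures. Throughout one uses that $\B(\rn)$, $\F(\rn)$ are independent of the admissible choice of $\vz$.

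\emph{The main obstacle.} To my mind the crux is the nonvanishing of $\Theta_\ell$ on $\rn\setminus\{0\}$: an adapted Calder\'on formula tolerates no zero of the analysis symbol on any dyadic annulus, and for $\ell\ge2$ the positivity of $1+\frac2{\binom{2\ell}\ell}\sum_{j=1}^\ell(-1)^j\binom{2\ell}{\ell-j}\wh\psi(j\xi)$ is far from obvious. The identity $\Theta_\ell(\xi)=\frac{4^\ell}{\binom{2\ell}\ell}\dashint_{B(0,1)}\sin^{2\ell}(\xi\cdot y/2)\,dy$ makes it transparent and simultaneously pins down the order of vanishing at $0$ (hence the sharp range $\az\in(0,2\ell)$) and the limit $1$ at infinity; deriving and exploiting this formula is the decisive move. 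The rest is expected bookkeeping: $K_\ell$ is a compactly supported distribution rather than a Schwartz function and $\Theta_\ell$ merely tends to $1$ (instead of decaying) at infinity, so one must run the Calder\'on/local-means machinery in the ``compact support $+$ vanishing moments'' framework; and when $\az\ge n/p$ the spaces $\B(\rn)$, $\F(\rn)$ are realized only modulo polynomials --- which is why one only gets ``there exists $g$'', harmlessly, since $I-B_{\ell,t}$ annihilates polynomials of degree $\le2\ell-1\ge\lfloor\az\rfloor$. The two Triebel--Lizorkin endpoints $p=\fz$ are the most delicate and rely on the Carleson-measure description of $\dot F^\az_{\fz,q}$ in both directions.
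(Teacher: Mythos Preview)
Your proposal is correct and follows essentially the same route as the paper: the identification of the multiplier as $\Theta_\ell(\xi)=\frac{4^\ell}{\binom{2\ell}\ell}\dashint_{B(0,1)}\sin^{2\ell}(\xi\cdot y/2)\,dy$ (which the paper writes after radial reduction as $A_\ell(|\xi|)=\gamma_n\frac{4^\ell}{\binom{2\ell}\ell}\int_0^1(1-u^2)^{(n-1)/2}\sin^{2\ell}(u|\xi|/2)\,du$), its strict positivity away from the origin, and its vanishing of order exactly $2\ell$ at the origin are the same key facts, and Step~2 is the same ``split at $j=k$ and use Fefferman--Stein'' argument.

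The only genuine difference is in Step~3. You build a full adapted Calder\'on reproducing formula $f=\sum_k\phi_{2^{-k}}\ast(f-B_{\ell,2^{-k}}f)$ by constructing $\wh\phi=\oz/D$; the paper bypasses this entirely by observing that since $A_\ell(|\xi|)$ is bounded away from zero on the annulus $\{1/2\le|\xi|\le2\}$, the function $\eta(\xi):=\wh\vz(\xi)/A_\ell(|\xi|)$ lies in $C_c^\infty(\rn)$, whence directly
\[
(f\ast\vz_{2^{-j}})^\wedge(\xi)=\eta(2^{-j}\xi)\,(f-B_{\ell,2^{-j}}f)^\wedge(\xi)
\]
and the multiplier lemma gives $|f\ast\vz_{2^{-j}}(x)|\ls M(f-B_{\ell,2^{-j}}f)(x)$ in one line. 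Your construction works, but the paper's shortcut is worth knowing: once you have $\Theta_\ell>0$ on an annulus, you can simply divide by it there, and no partition-of-unity or synthesis kernel is needed.
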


\begin{remark}
(i) Notice that $f-B_{\ell,2^{-k}}f$ can be easily defined on
any space of homogeneous type. Thus, the characterizations of $\B(\rn)$ and
$\F(\rn)$ obtained in Theorem \ref{t-bf} provide a possible way
to introduce Besov and Triebel-Lizorkin spaces with arbitrary positive
smoothness order on spaces of homogeneous type, while the characterizations
of Besov and Triebel-Lizorkin spaces via the usual differences can not.

(ii) Observing that $f-B_{1,t}f=f-B_tf$, we see that, when $\az\in(0,2)$,
Theorem \ref{t-bf} just coincides with \cite[Theorems 1.1 and 4.1]{yyz}.
When $\az\in(2,\fz)$, comparing with Theorem \ref{t-bf} and
\cite[Theorems 1.2 and 4.1]{yyz}, we find that the former provides
a way to introduce Besov and Triebel-Lizorkin spaces with smoothness order no less than $2$ on spaces of
homogeneous type, while the later has a restriction that $\az$ can not be
any even positive integer and also can not be generalized to any space of homogeneous type,
due to the lack of derivatives on spaces of homogeneous type.

(iii) In \cite{hmy}, a concept of RD-spaces was introduced,
namely, a space of homogeneous type whose measure also satisfies
the inverse doubling condition is called an \emph{RD-space}
(see also \cite{hmy, yz11} for several equivalent definitions
of RD-spaces). Via approximations to the identity,
a theory of Besov and Triebel-Lizorkin spaces with smoothness order
in $(-1, 1)$ on RD-spaces was also systematically developed
in \cite{hmy}. Then, a natural and interesting question is: on RD-spaces, whether
the Besov and Triebel-Lizorkin spaces with smoothness order in $(0,1)$ defined in \cite{hmy}
coincide with those defined via $f-B_{1,2^{-k}}f$ in sprit of Theorem \ref{t-bf}
or not. We will not seek an answer of this question in this article.
\end{remark}

Theorem \ref{t-bf} is proved in Section \ref{s2}. Comparing with
those proofs for various pointwise characterizations of Sobolev spaces
$W^{2\ell,p}(\rn)$ via $f-B_{\ell,t}f$ in \cite{dgyy}, the proof of Theorem \ref{t-bf}
 is much more complicated. Indeed, the main idea of the proof for
Theorem \ref{t-bf} is to write $f-B_{\ell,2^{-k}}f$ as a convolution operator,
then control $f-B_{\ell,2^{-k}}f$ by
certain maximal functions via calculating pointwise estimates of the related
operator kernel and finally apply the vector-valued maximal
inequality of Fefferman and Stein in \cite{fs71}. The Calder\'on
reproducing formula on $\rn$ (see, for example, \cite{fjw91})
also plays a key role in this proof.

In Section \ref{s3}, we further show that the inhomogeneous variant of Theorem \ref{t-bf}
also holds true (see Theorem \ref{t-bf-i} below). We also show that Theorems \ref{t-bf}
and \ref{t-bf-i} still hold true on Euclidean spaces with non-Euclidean metrics.

Finally, we make some conventions on notation.
The \emph{symbol}  $C $ denotes   a positive constant
which depends
only on the fixed parameters $n,\,\az,\,p,\,q$ and possibly on auxiliary functions,
unless otherwise stated; its value  may vary from line to line.
We use the \emph{symbol} $A \ls B$ to denote that
there exists a positive constant $C$ such that
 $A \le C \,B$.
The symbol $A \sim B$ is used as an abbreviation of
$A \ls B \ls A$. We also use the \emph{symbol} $\lfloor s\rfloor$
for any $s\in\rr$ to denote the maximal integer not more than $s$.

\section{Proof of Theorem \ref{t-bf}}\label{s2}
\hskip\parindent
To prove Theorem \ref{t-bf}, we need some technical lemmas.
Let, for all $t\in(0,\fz)$ and $x\in\rn$,
$I(x):=\f 1 {|B(0,1)|} \chi_{B(0,1)} (x)$ and  $I_t(x):=t^{-n} I(x/t)$.
Then
 $$( B_{\ell, t} f)(x) =\f {-2} {\binom{2\ell}{\ell}} \sum_{j=1}^\ell (-1)^j \binom{2\ell}{\ell-j}
 ( f * I_{jt})(x),\quad x\in\rn,\ \ t\in(0,\fz),$$
and hence
\begin{equation}\label{mq}
(B_{\ell, t} f)^{\we} (\xi) = m_\ell (t\xi) \wh{f} (\xi),\quad    \  \xi\in\rn,
\end{equation}
where
\begin{equation}\label{2-1}
m_\ell (x):= \f {-2}{\binom{2\ell}{\ell}} \sum_{j=1}^\ell (-1)^j \binom{ 2\ell} { \ell-j} \wh{I} (j x),\quad    \    x\in\rn.
\end{equation}
A straightforward calculation shows that
\begin{equation}\label{2-2}
 \wh{I}(x) =\ga_n\int_0^1 \cos (u|x|) (1-u^2) ^{\f {n-1}2} \, du, \    \  x\in\rn,
 \end{equation}
with
$\ga_n := [ \int_0^1(1-u^2) ^{\f {n-1}2} \, du]^{-1}$
(see also Stein's book \cite[p.\,430, Section 6.19]{s93}).

\begin{lemma}\label{lem-2-1}
For all $\ell\in\nn$ and $x\in\rn$,
\begin{equation}\label{2-3}
m_{\ell}(x) =1-A_\ell(|x|),\end{equation}
where
\begin{equation}\label{2-4}
A_\ell(s) := \ga_n \f {4^{\ell}}{\binom {2\ell}{\ell}} \int_0^1 (1-u^2)^{\f
{n-1}2} \lf(\sin \f {us}2\r)^{2\ell} \, du,\   \   \  s\in\mathbb{R}.\end{equation}
Furthermore, $s^{-2\ell} A_{\ell}(s)$ is a   smooth function on $\mathbb{R}$ satisfying that
there exist positive constants $c_1$ and $c_2$ such that
\begin{equation}\label{2-5}
0<c_1 \leq \f {A_{\ell} (s)}{s^{2\ell}} \leq c_2,\    \    \ s\in(0,4]
\end{equation}
and
$$\sup_{s\in \mathbb{R}}
\lf| \lf(\f {d} {ds}\r)^i \lf(\f {A_{\ell} (s)}{s^{2\ell}}\r)\r| <\infty,\
\   \  i\in\nn.$$
\end{lemma}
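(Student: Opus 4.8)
\textbf{Proof plan for Lemma \ref{lem-2-1}.}

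The plan is to derive the three assertions in order by unwinding the definitions \eqref{2-1}, \eqref{2-2} and then exploiting a trigonometric identity that collapses the alternating binomial sum into a single $\sin^{2\ell}$ term. First I would substitute the integral formula \eqref{2-2} for $\wh I(jx)$ into \eqref{2-1}. Since only $|x|$ enters through $\cos(ju|x|)$, this reduces $m_\ell(x)$ to $\ga_n\int_0^1(1-u^2)^{(n-1)/2}\,[\,\text{something}(u|x|)\,]\,du$, where the bracketed term is, up to the normalising constant, $\f{-2}{\binom{2\ell}{\ell}}\sum_{j=1}^\ell(-1)^j\binom{2\ell}{\ell-j}\cos(jv)$ with $v=u|x|$. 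The key computation is the classical identity $\sum_{j=-\ell}^{\ell}(-1)^j\binom{2\ell}{\ell-j}e^{ijv}=(e^{iv/2}-e^{-iv/2})^{2\ell}\cdot(-1)^{\ell}=(-1)^\ell(2i\sin(v/2))^{2\ell}=4^\ell\sin^{2\ell}(v/2)$ (just the binomial theorem applied to $(1-e^{iv})^{2\ell}$ after pulling out $e^{i\ell v}$); taking real parts and isolating the $j=0$ term $\binom{2\ell}{\ell}$ gives $2\sum_{j=1}^\ell(-1)^j\binom{2\ell}{\ell-j}\cos(jv)=4^\ell\sin^{2\ell}(v/2)-\binom{2\ell}{\ell}$. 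Plugging this in and using $\ga_n\int_0^1(1-u^2)^{(n-1)/2}\,du=1$ yields exactly $m_\ell(x)=1-A_\ell(|x|)$ with $A_\ell$ as in \eqref{2-4}.

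Next, for the smoothness of $s\mapsto s^{-2\ell}A_\ell(s)$ on all of $\RR$, I would observe that $\sin(us/2)=(us/2)\,\psi(us/2)$ where $\psi(\tau):=\sin\tau/\tau$ extends to an even entire function with $\psi(0)=1$; hence $(\sin(us/2))^{2\ell}=(us/2)^{2\ell}\psi(us/2)^{2\ell}$, and therefore
\[
\f{A_\ell(s)}{s^{2\ell}}=\ga_n\f{4^\ell}{\binom{2\ell}{\ell}}\,\f1{4^\ell}\int_0^1(1-u^2)^{(n-1)/2}u^{2\ell}\,\psi\!\lf(\f{us}2\r)^{2\ell}\,du,
\]
which is smooth in $s$ by differentiation under the integral sign, the $u$-integral being over the compact set $[0,1]$ with an integrable weight and an entire integrand in $s$. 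The uniform bound on all derivatives $\big|(d/ds)^i(s^{-2\ell}A_\ell(s))\big|$ then follows because each $s$-derivative of $\psi(us/2)^{2\ell}$ is, by the chain rule and the boundedness of $\psi$ and all its derivatives on $\RR$, bounded uniformly in $s$ and $u\in[0,1]$ by a constant times a fixed power of $u\le1$, and $\int_0^1(1-u^2)^{(n-1)/2}u^{2\ell}\,du<\fz$.

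Finally, for the two-sided bound \eqref{2-5} on $s\in(0,4]$: the upper bound is immediate from $|\sin(us/2)|\le 1$ (or from the smoothness just established, since a continuous function on $[0,4]$ is bounded). For the lower bound, positivity is clear since the integrand in the displayed formula for $s^{-2\ell}A_\ell(s)$ is nonnegative and not identically zero; to get a bound bounded away from $0$ uniformly on $(0,4]$, I would note that $\psi(\tau)=\sin\tau/\tau\ge c_0>0$ for $\tau\in[0,2]$ (as $us/2\le 2$ when $u\le1$, $s\le4$, and $\psi$ is continuous, positive on $[0,2]$, with no zero there since the first zero of $\sin$ on $(0,\pi]$ gives $\psi(\pi)=0$ but $\pi>2$... wait, need $\psi>0$ on $[0,2]$, which holds because $\sin\tau>0$ on $(0,\pi)\supset(0,2]$), so $\psi(us/2)^{2\ell}\ge c_0^{2\ell}$ and hence $s^{-2\ell}A_\ell(s)\ge \ga_n\binom{2\ell}{\ell}^{-1}c_0^{2\ell}\int_0^1(1-u^2)^{(n-1)/2}u^{2\ell}\,du=:c_1>0$. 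The main obstacle, such as it is, is the trigonometric identity in the first paragraph: getting the alternating binomial sum to collapse cleanly to $4^\ell\sin^{2\ell}(v/2)$ and correctly tracking the normalising constants so that the ``$1-$'' emerges exactly; everything after that is routine differentiation under the integral sign and elementary estimates on $\sin\tau/\tau$.
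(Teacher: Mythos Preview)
Your proof is correct and follows essentially the same route as the paper: the identity $4^{\ell}\sin^{2\ell}(v/2)=\binom{2\ell}{\ell}+2\sum_{j=1}^{\ell}(-1)^{j}\binom{2\ell}{\ell-j}\cos jv$ is exactly what the paper invokes for \eqref{2-3}, and the subsequent analysis of $s^{-2\ell}A_\ell(s)$ proceeds by the same idea of factoring $(us/2)^{2\ell}$ out of $\sin^{2\ell}(us/2)$. The only cosmetic difference is that you package the remaining factor as the entire function $\psi(\tau)=\sin\tau/\tau$ and differentiate under the integral, whereas the paper writes it via the mean value theorem as $\cos^{2\ell}(us\theta/2)$ and, for the lower bound in \eqref{2-5}, restricts the $u$-integration to a subinterval where the cosine is bounded below; your argument that $\psi$ is continuous and strictly positive on $[0,2]$ is slightly more direct.
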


\begin{proof}
Combining \eqref{2-1}  with \eqref{2-2}, we obtain
 \begin{equation}\label{2-6}
 m_\ell (x) =\f {-2\ga_n} {\binom{2\ell}{\ell}}
 \int_0^1  \lf[\sum_{j=1}^\ell (-1)^j \binom {2\ell} {\ell-j}\cos ( j u |x|) \r](1- u^2)^{\f {n-1}2} \, du,\quad x\in\rn.
 \end{equation}
 However, a straightforward calculation shows that, for all $s\in \mathbb{R}$,
  $$4^{\ell} \lf(\sin \f s2\r)^{2\ell} = \binom {2\ell} {\ell}  +
  2\sum_{j=1}^\ell (-1)^j \binom {2\ell}{\ell-j} \cos j s.$$
  This, together with \eqref{2-6}, implies \eqref{2-3}.

Next we show \eqref{2-5}. By the mean value theorem, we know that,
for all $u\in(0,1)$ and $s\in\rr$, there exists $\theta\in(0,1)$ such that
$$\left(\sin\frac{us}{2}\right)^{2\ell}=
\lf(\frac12us\r)^{2\ell}\left(\cos \frac{us\theta}2\right)^{2\ell}.$$
From this and \eqref{2-4}, we deduce that, for all $s\in(0,4]$,
$$\f {A_{\ell} (s)}{s^{2\ell}}\le \ga_n \f {4^{\ell}}{2\binom {2\ell}{\ell}} \int_0^1 (1-u^2)^{\f
  {n-1}2} u^{2\ell} \, du=:c_2<\fz$$
and
\begin{eqnarray*}
\f {A_{\ell} (s)}{s^{2\ell}}
&&\ge \ga_n \f {4^{\ell}}{2\binom {2\ell}{\ell}} \int_0^{\min\{1,\frac{2\pi}{3s}\}} (1-u^2)^{\f
  {n-1}2} u^{2\ell} \left(\cos \frac{us\theta}2\right)^{2\ell}\, du\\
&&\ge \ga_n \f {1}{2\binom {2\ell}{\ell}} \int_0^{\min\{1,\frac{2\pi}{3s}\}} (1-u^2)^{\f
  {n-1}2} u^{2\ell} \, du\\
  &&\ge \gamma_n \frac1{2\binom{2\ell}{\ell}}\int_0^{\frac{\pi}6}(1-u^2)^{\frac{n-1}2}u^{2\ell}\,du=:c_1>0.
\end{eqnarray*}
These prove \eqref{2-5}.

Finally, by the mean value theorem again, an argument similar to the above
also implies that  $$\sup_{s\in \mathbb{R}}
  \lf| \lf(\f {d} {ds}\r)^i \lf(\f {A_{\ell} (s)}{s^{2\ell}}\r)\r|<\fz$$
  for all $i\in\nn$. This finishes the proof of Lemma \ref{lem-2-1}.
  \end{proof}

Recall that the \emph{Hardy-Littlewood maximal operator $M$} is defined by
setting, for all $f\in L^1_\loc(\rn)$,
 $$Mf(x):=\sup_{B\subset \rn} \dashint_B |f(y)|\,dy,\quad x\in\rn,$$
 where the supremum is taken over all balls $B$ in $\rn$ containing $x$.
The following two lemmas can be verified straightforwardly.

\begin{lemma}\label{lem-2-2}  Let  $\{T_t\}_{t\in(0,\fz)}$ be a family of multiplier
operators given  by setting, for all $f\in L^2(\rn)$,
 $$ ( T_t f)^{\we} (\xi) := m(t\xi) \wh{f}(\xi),\quad     \     \xi\in \rn,\   \   t\in(0,\fz)$$
 for some $ m\in L^\infty(\rn)$. If
 $$ \|\nabla^{n+1}  m \|_{L^1(\rn)}+\|m\|_{L^1(\rn)}  \leq C_1<\infty, $$
 then there exists a positive constant $C$ such that, for all $f\in L^2(\rn)$ and $x\in\rn$,
 $$ \sup_{t\in(0,\fz)} | T_t f(x)|\le  C C_1\     Mf(x).$$
\end{lemma}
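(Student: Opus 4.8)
The plan is to reduce everything to a pointwise kernel bound of the form $|K_t(x)|\lesssim t^{-n}(1+|x|/t)^{-(n+1)}$, where $K_t$ is the convolution kernel of $T_t$, and then to dominate $T_tf$ by $Mf$ using the standard fact that convolution against a radially decreasing $L^1$ majorant is controlled by the Hardy-Littlewood maximal function. First I would observe that, by the scaling of the multiplier, $T_tf=f\ast K_t$ with $\widehat{K_t}(\xi)=m(t\xi)$, hence $K_t(x)=t^{-n}K(x/t)$ where $K:=(m)^{\vee}$ is the inverse Fourier transform of $m$. Thus it suffices to prove the single estimate
\begin{equation*}
|K(x)|\le C\,C_1\,(1+|x|)^{-(n+1)},\qquad x\in\rn,
\end{equation*}
since then $|T_tf(x)|=|f\ast K_t(x)|\le\int_\rn|f(x-y)|\,t^{-n}|K(y/t)|\,dy\lesssim C_1\int_\rn|f(x-y)|\,t^{-n}(1+|y|/t)^{-(n+1)}\,dy\lesssim C_1\,Mf(x)$, the last inequality being the classical majorant estimate (decompose $\rn$ into the dyadic annuli $|y|\sim 2^j t$ and sum).

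To establish the kernel bound I would split into the region $|x|\le 1$ and $|x|\ge 1$. For $|x|\le 1$, simply $|K(x)|\le(2\pi)^{-n}\|m\|_{L^1(\rn)}\le C_1$, which already gives the claimed bound there. For $|x|\ge 1$, I would use that differentiation of $\widehat{K}=m$ corresponds to multiplication by a polynomial: since $\nabla^{n+1}m\in L^1(\rn)$, each component of $x^\beta K(x)$ with $|\beta|=n+1$ is, up to constants, the inverse Fourier transform of a derivative $\partial^\beta m$, hence bounded in sup-norm by $\|\nabla^{n+1}m\|_{L^1(\rn)}\le C_1$. Choosing $\beta$ so that $|x^\beta|\gtrsim|x|^{n+1}$ (e.g. the coordinate direction realizing $\max_i|x_i|\ge|x|/\sqrt n$), we get $|x|^{n+1}|K(x)|\lesssim C_1$, i.e. $|K(x)|\lesssim C_1|x|^{-(n+1)}\lesssim C_1(1+|x|)^{-(n+1)}$ for $|x|\ge1$. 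Combining the two regions yields the global majorant.

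The main point requiring a little care — rather than a genuine obstacle — is the passage from the $L^2$-defined multiplier operator to the pointwise convolution identity $T_tf=f\ast K_t$ with an honest, integrable kernel: this is legitimate precisely because the hypotheses force $m\in L^1(\rn)$ and, as just shown, $(1+|\cdot|)^{n+1}K\in L^\infty(\rn)$, so $K\in L^1(\rn)\cap L^2(\rn)$ and the identity holds first for $f\in L^2\cap L^1$ and then extends by density and the maximal-function bound to all $f\in L^2(\rn)$. I would also record that the constant produced is of the form $C\,C_1$ with $C$ depending only on $n$, as claimed. All constants are tracked only up to the dimensional factor; no further structure of $m$ is used beyond the two stated $L^1$ bounds.
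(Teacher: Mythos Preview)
Your proof is correct and follows essentially the same route as the paper: both arguments use $\|m\|_{L^1}$ to control the kernel near the origin and integration by parts (equivalently, the identity $(ix)^\beta K=(\partial^\beta m)^{\vee}$) to obtain $|x|^{-(n+1)}$ decay at infinity, followed by a dyadic summation to reach $Mf$. The only cosmetic difference is that you first isolate the pointwise bound $|K(x)|\lesssim C_1(1+|x|)^{-(n+1)}$ and then invoke the radial-majorant lemma, whereas the paper splits the convolution integral directly into the regions $|x-y|<t$ and $|x-y|\ge t$.
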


\begin{proof}
For all $t\in(0,\fz)$, $f\in L^2(\rn)$ and $x\in\rn$,
by the Fubini theorem, we see that
\begin{eqnarray*}
|T_tf(x)|&&=\lf|\int_\rn m(t\xi)\widehat{f}(\xi) e^{ix\cdot\xi}\,d\xi\r|\\
&&=\lf|\int_\rn f(y)\int_\rn m(t\xi) e^{i(x-y)\cdot\xi}\,d\xi\,dy\r|\\
&&\le\lf|\int_{|x-y|<t} f(y)\int_\rn m(t\xi) e^{i(x-y)\cdot\xi}\,d\xi\,dy\r|+\lf|\int_{|x-y|\ge t}\cdots\r|=:{\rm I}+{\rm II}.
\end{eqnarray*}

It is easy to see that ${\rm I}\ls \|m\|_{L^1(\rn)}Mf(x)$.

For ${\rm II}$, via the Fubini theorem and the integration by parts,
we also have
\begin{eqnarray*}
{\rm II}&&\ls \int_{|x-y|\ge t} \frac{|f(y)|}{|x-y|^{n+1}}\int_\rn t^{n+1}|\nabla^{n+1}m(t\xi)| \,d\xi\,dy\\
&&\ls \|\nabla^{n+1}  m \|_{L^1(\rn)}\sum_{j=1}^\fz t\int_{2^{jt}\le|x-y|<2^{j+1}t}
\frac{|f(y)|}{|x-y|^{n+1}}\,dy\\
&&\ls \|\nabla^{n+1}  m \|_{L^1(\rn)}\sum_{j=1}^\fz 2^{-j}Mf(x)
\ls \|\nabla^{n+1}  m \|_{L^1(\rn)}Mf(x),
\end{eqnarray*}
which completes the proof of Lemma \ref{lem-2-2}.
\end{proof}

\begin{remark}\label{rem-e}
(i) The above proof of Lemma \ref{lem-2-2} actually yields the following more subtle estimate,
which is also needed in the proof of Theorem \ref{t-bf}: Assume that $f\in L^2(\rn)$, $x\in B(z,s)$ for some $z\in\rn$
and $s\in(0,\fz)$. Then
there exists a positive constant $C$, independent of $t$, $s$, $f$ and $x$, such that, for all
$l\in \nn\cap (n,\fz)$ and $t\in(0,\fz)$,
\begin{eqnarray*}
|T_tf(x)|&&\le C(\|m\|_{L^1(\rn)}+ \|\nabla^{l}m\|_{L^1(\rn)})
\sum_{i=0}^\fz 2^{-i(l-n)}M(f\chi_{B(z,2^it+s)})(x).
\end{eqnarray*}
Indeed, notice that $x\in B(z,s)$, $y\in B(x,t)$ and $t\in(0,\fz)$ imply that
$y\in B(z, t+s)$. Thus, by the Fubini theorem, we find that
\begin{eqnarray*}
{\rm I}\le \lf|\int_{|x-y|<t} f(y)\chi_{B(z,t+s)}(y)\int_\rn m(t\xi) e^{i(x-y)\cdot\xi}\,d\xi\,dy\r|\ls \|m\|_{L^1(\rn)}M(f\chi_{B(z,t+s)})(x).
\end{eqnarray*}
Similarly, by the Fubini theorem and the integration by parts, together with $\ell\in \nn\cap (n,\fz)$,
we know that
\begin{eqnarray*}
{\rm II}&&\ls \int_{|x-y|\ge t} \frac{|f(y)|}{|x-y|^{l}}\int_\rn t^{l}|\nabla^{l}m(t\xi)| \,d\xi\,dy\\
&&\ls \|\nabla^{l} m \|_{L^1(\rn)} \int_{|x-y|\ge t} \frac{t^{l-n}|f(y)|}{|x-y|^{l}}\,dy\\
&&\ls \|\nabla^{l}m\|_{L^1(\rn)}\sum_{i=1}^\fz (2^it)^{-l} t^{l-n}
\int_{|x-y|\sim 2^it}|f(y)|\chi_{B(z,2^it+s)}(y)\,dy\\
&&\ls \|\nabla^{n+1}m\|_{L^1(\rn)}\sum_{i=1}^\fz 2^{-i(l-n)}M(f\chi_{B(z,2^it+s)})(x),
\end{eqnarray*}
where $|x-y|\sim 2^it$ means $2^{i-1}t\le |x-y|< 2^it$. This finishes the proof of the above claim.

(ii) We also point out that the conclusions of Lemma \ref{lem-2-2} and (i) of this remark
remain true for all $f\in L^p(\rn)$ with $p\in(1,\infty)$ and $m\in \cs(\rn)$.
\end{remark}

From the H\"older inequality when $q\in [1,\fz]$ and the monotonicity
of $l^q$ when $q\in (0,1)$, we immediately deduce the following conclusions,
the details being omitted.

\begin{lemma}\label{lem-2-3}
Let $\{ a_j \}_{j\in \ZZ}\subset \CC$,  $q\in(0,\fz]$ and $\b\in(0,\fz)$.
Then there exists a positive constant $C$, independent of $\{a_j\}_{j\in\zz}$, such that
\begin{equation*} 
\left[\sum_{k\in \ZZ}  2^{k\be  q}\left
( \sum_{j=k}^\infty |a_j|\right)^q \right]^{1/q} \leq C \left(\sum_{k\in\ZZ} 2^{k \be q} |a_k| ^q  \right)^{1/q}
\end{equation*}
and
\begin{equation*}
\left[\sum_{k\in\ZZ} 2^{-k \b q}\left( \sum _{ j=-\infty}^k |a_j|\right)^q \right]^{1/q} \leq C \left(\sum_{k\in\ZZ} 2^{ - k\b q}|a_k|^q \right)^{1/q}.
\end{equation*}
\end{lemma}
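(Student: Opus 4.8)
The plan is to reduce both inequalities to a single discrete convolution estimate with an $\ell^1(\ZZ)$ kernel, and then to split the argument according to whether $q\in[1,\fz]$ or $q\in(0,1)$, since Young's convolution inequality is available only in the former range.

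For the first inequality, I would set $b_j:=2^{j\b}|a_j|$ for $j\in\ZZ$, so that the claim becomes $\|b\ast\phi\|_{\ell^q(\ZZ)}\le C\|b\|_{\ell^q(\ZZ)}$, where $(b\ast\phi)_k:=\sum_{j\in\ZZ}\phi_{k-j}b_j$ and $\phi_m:=2^{m\b}$ for $m\le0$, $\phi_m:=0$ for $m>0$; indeed $\sum_{j=k}^\fz 2^{(k-j)\b}b_j=(b\ast\phi)_k$ and $\|\phi\|_{\ell^1(\ZZ)}=\sum_{m\le 0}2^{m\b}=(1-2^{-\b})^{-1}<\fz$ precisely because $\b>0$. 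When $q\in[1,\fz)$, Young's inequality (equivalently, a combination of the Minkowski and H\"older inequalities) gives $\|b\ast\phi\|_{\ell^q(\ZZ)}\le\|\phi\|_{\ell^1(\ZZ)}\|b\|_{\ell^q(\ZZ)}$, which is the assertion with $C=(1-2^{-\b})^{-1}$; the case $q=\fz$ is the elementary observation that $|a_j|\le 2^{-j\b}\sup_i 2^{i\b}|a_i|$, so that $2^{k\b}\sum_{j\ge k}|a_j|\le(1-2^{-\b})^{-1}\sup_i 2^{i\b}|a_i|$.

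When $q\in(0,1)$, I would instead use the elementary bound $(\sum_\iota|c_\iota|)^q\le\sum_\iota|c_\iota|^q$ to get, for each $k\in\ZZ$, $(\sum_{j=k}^\fz 2^{(k-j)\b}b_j)^q\le\sum_{j=k}^\fz 2^{(k-j)\b q}b_j^q$, and then exchange the order of summation: $\sum_{k\in\ZZ}\sum_{j=k}^\fz 2^{(k-j)\b q}b_j^q=\sum_{j\in\ZZ}b_j^q\sum_{k=-\fz}^{j}2^{(k-j)\b q}=(1-2^{-\b q})^{-1}\sum_{j\in\ZZ}b_j^q$, which is again the assertion, now with $C=(1-2^{-\b q})^{-1/q}$. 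The second inequality is entirely symmetric: one sets $b_j:=2^{-j\b}|a_j|$, notes that $2^{-k\b}\sum_{j=-\fz}^k|a_j|=\sum_{j=-\fz}^k 2^{(j-k)\b}b_j$ is again a convolution of $b$ with the same kernel $m\mapsto 2^{m\b}\chi_{\{m\le 0\}}\in\ell^1(\ZZ)$, and one repeats the two cases verbatim; alternatively it follows from the first inequality applied to the reflected sequence $(a_{-j})_{j\in\ZZ}$.

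No genuine obstacle arises here — this is a standard Hardy-type inequality for exponentially weighted sequences. The only points deserving attention are that one must not attempt to invoke Young's inequality when $q<1$ (the $\ell^q$--$\ell^1$ trick above replaces it), and that the convergence of the geometric series defining the kernel norm, hence the finiteness of $C$, hinges on the hypothesis $\b>0$.
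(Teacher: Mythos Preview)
Your proof is correct and follows essentially the same approach as the paper: the paper states (without details) that the inequalities follow from the H\"older inequality when $q\in[1,\fz]$ and from the monotonicity of $\ell^q$ when $q\in(0,1)$, which is exactly your case split --- Young's inequality for the $\ell^1$ kernel is the H\"older/Minkowski argument, and your use of $(\sum_\iota|c_\iota|)^q\le\sum_\iota|c_\iota|^q$ is the monotonicity of $\ell^q$. Your write-up simply supplies the details the paper omits.
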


Now we prove Theorem \ref{t-bf}.

\begin{proof}[Proof of Theorem \ref{t-bf}]
We only prove (ii), the proof of (i) being similar and easier.

To show (ii), let $\vz\in\cs(\rn)$ satisfy \eqref{con} and
$\sum_{j\in\zz} \widehat{\vz}_j\equiv 1$ on $\rn\setminus\{0\}$. Assume first that $\az\in(0,2\ell)$,
$p\in(1,\fz)$ and $q\in(1,\fz]$.
Let $f\in \dot F^\az_{p,\,q}(\rn)$.
We know that
$\dot F^\az_{p,\,q}(\rn) \hookrightarrow L^1_\loc(\rn)$
in the sense of distributions;
see, for example, \cite[Propsition 4.2]{my},
\cite[Proposition 5.1]{yz11} or \cite[Proposition 8.2]{ysy} for a proof.
Indeed, it was proved therein that there exists a sequence $\{P_j\}_{j\in\zz}$ of polynomials
of degree not more than $\lfloor \az-n/p\rfloor $ such that the summation $\sum_{j\in\zz}(\vz_j\ast f+P_j)$
converges in $L^1_\loc(\rn)$ and $\cs'_\fz(\rn)$ to a function $g\in L^1_\loc(\rn)$, which is known to be the
Calder\'on reproducing formula (see, for example, \cite{FJ90,fjw91}).
The function $g$ serves as a representative of $f$.
Thus, in the below proof, we identify $f$ with $g$.
Then $g\in L^1_\loc(\rn)\cap \cs'_\fz(\rn)$.
Now we show $|||g|||_{\F(\rn)}\ls \|f\|_{\F(\rn)}$, namely,
 \begin{equation}\label{3-1}
 \left\|\lf\{\sum_{k\in\zz}
2^{k\az q}|g-B_{\ell,2^{-k}}g|^q\right\}^{1/q}\right\|_{L^p(\rn)}\ls \|f\|_{\F(\rn)}.
 \end{equation}
To this end,
for all $k,\,j\in\zz$ and $\xi\in\rn\setminus\{0\}$, define
$T_{k,j}$ as
 \begin{equation}\label{3-2}
 (T_{k, j} f )^{\we} (\xi):=\wh{\vi} (2^{-j} \xi) A_{\ell} ( 2^{-k } |\xi|)  \wh{f}(\xi),\quad \xi\in\rn.
 \end{equation}
Noticing that the degree of each $P_j$ is not more than $\lfloor \az-n/p\rfloor<2\ell$
and $P-B_{\ell,2^{-k}}P=0$ for all polynomials $P$ of degree less than $2\ell$,
we then find that
\begin{equation} \label{3-3}
g- B_{\ell, 2^{-k}} g = \sum_{j\in \ZZ}
T_{k, j} f.
\end{equation}
We split the sum $\sum_{j\in\ZZ}$ in this last equation into two parts $\sum_{j\ge k}$ and $\sum_{j<k}$. The first part is relatively easy
to deal with. Indeed, for $j\ge k$, by \eqref{3-2}, we see that, for all $x\in\rn$,
\begin{align}\label{in}
  |T_{k,j} f(x)|& = |( I- B_{\ell, 2^{-k}} ) ( f\ast \vi_{2^{-j}}) (x)| \notag\\
  &\leq |f \ast \vi_{2^{-j}}(x)| +C_{\ell} \sum _{ i=1}^\ell |B_{i 2^{-k}} ( f\ast \vi_{2^{-j}} )(x)|  \notag\\
  &\ls M (f\ast \vi_{2^{-j}})(x).
 \end{align}
From this and Lemma \ref{lem-2-3}, it follows that
\begin{align}\label{3-4}
\sum_{k\in\ZZ}  2^{k\a q}\lf|\sum_{ j\ge k} T_{k,j}   f\r|^q&\ls  \sum_{k\in\ZZ}  2^{k\a q}\lf[\sum_{ j\ge k}    M (f\ast \vi_{2^{-j}}) \r]^q\notag\\
&\ls \sum_{j\in\ZZ} 2^{j q\a} [ M (f\ast \vi_{2^{-j}})]^q.
\end{align}

Now we  handle the sum $\sum_{j<k}$. Since $\vz$ satisfies \eqref{con},
by \cite[Lemma (6.9)]{fjw91}, there exists $\psi\in \cs(\rn)$ satisfying
\eqref{con} such that
$$\sum_{j\in\zz} \wh\vz(2^{-j}\xi)\wh\psi(2^{-j}\xi)=1,\quad \xi\in\rn\setminus\{0\}.$$
Thus, for all $\xi\in \rn\setminus\{0\}$,
$$(T_{k,j} f)^{\we} (\xi) = \wh{\vi} (2^{-j} \xi) A_{\ell }(2^{-k} |\xi|) \wh{f_j} (\xi)=m_{k,j}(\xi) \wh{f}_j(\xi),$$
  where $f_j :=\sum_{i=-1}^1f\ast \psi_{2^{i-j}}$ and
  $$ m_{k,j} (\xi) :=  \wh{\vi} (2^{-j} \xi) \f {A_{\ell }(2^{-k} |\xi|) } { ( 2^{-k } |\xi|)^{2\ell}  } ( 2^{-k } |\xi|)^{2\ell},\qquad \xi\in\rn\setminus\{0\}.$$
Write $\wz m_{k,j}(\xi):= m_{k,j} (2^j\xi)$. From  Lemma \ref{lem-2-1}, it follows that,
for all $j<k$ and $\xi\in \rn\setminus\{0\}$,
\begin{equation}\label{e-m-0}
 |\partial^\bz \wz m_{k,j} (\xi)| \ls  2^{ 2\ell (j-k)} \chi_{\overline{B(0,2)}\setminus B(0,1/2)}(\xi), \qquad    \bz\in\zz_+^d,
\end{equation}
and hence $\|\wz m_{k,j}\|_{L^1(\rn)}
+\|\nabla^{n+1}\wz m_{k,j}\|_{L^1(\rn)}\ls  2^{ 2\ell (j-k)}$,
which, together with Lemma \ref{lem-2-2}, implies that
$$|T_{k,j} f(x)| \ls 2^{ 2\ell (j-k)} M f_j (x),\qquad x\in\rn.$$
Thus,
by Lemma \ref{lem-2-3},  for $\a\in (0, 2\ell)$, we have
\begin{align}\label{3-5}
\sum_{k\in\ZZ}2^{ k\a q}  \lf|\sum_{j=-\infty }^k  T_{k,j} f\r|^q
&\ls \sum_{k\in\ZZ}2^{k(\a-2\ell)q}
\lf( \sum_{j=-\infty}^k 2^{2\ell j} M f_j \r)^q \notag\\
&\ls \sum_{j\in \ZZ}2^{j\a q} [M f_j]^q .
\end{align}

Combining \eqref{3-4} and \eqref{3-5}  with \eqref{3-3}, and using the Fefferman-Stein
vector-valued maximal inequality (see \cite{fs71} or \cite{s93}), we see that
\begin{eqnarray*}
&&\left\|\lf\{\sum_{k\in\zz}
2^{k\az q}|g-B_{\ell,2^{-k}}g|^q\right\}^{1/q}\right\|_{L^p(\rn)}\\
&&\quad\ls  \left\|\lf\{\sum_{k\in\zz}
2^{k\az q}[M(f\ast \vz_{2^{-k}})]^q\right\}^{1/q}\right\|_{L^p(\rn)}\ls \|f\|_{\F(\rn)}.
 \end{eqnarray*}
This proves \eqref{3-1} and hence finishes
the proof of the first part of Theorem \ref{t-bf}(ii).

To see the inverse conclusion, we only need to prove
\begin{equation}\label{3-6}
\|f\|_{\F(\rn)}\ls \left\|\lf\{\sum_{k\in\zz}
2^{k\az q}|f-B_{\ell,2^{-k}}f|^q\right\}^{1/q}\right\|_{L^p(\rn)}
\end{equation}
whenever $f\in L^1_\loc(\rn)\cap \cs'_\fz(\rn)$ and the right-hand side of \eqref{3-6}
is finite.
To this end, we first claim that
\begin{equation}\label{3-7}
|f\ast \vi_{2^{-j}}(x)| \ls M ( f -B_{\ell, 2^{-j}} f)(x),\qquad j\in\zz,\ \ x\in\rn.
\end{equation}
Indeed, we see that, for all $j\in\zz$ and $\xi\in\rn\setminus\{0\}$,
\begin{align*}
 ( f\ast \vi_{2^{-j}} )^{\we}  (\xi) = \f {\wh{\vi} (2^{-j}\xi) }
 { A_{\ell} ( 2^{-j}|\xi|)} ( f -B_{\ell, 2^{-j}} f)^{\we} (\xi)
 =:\eta(2^{-j}\xi) ( f -B_{\ell, 2^{-j}} f)^{\we} (\xi),
\end{align*}
where $\eta(\xi):=\f { \wh{\vi} (\xi) } { A_{\ell}(|\xi|)}$ for
all $\xi\in\rn\setminus\{0\}$, which is well defined due to \eqref{2-5}.
By Lemma \ref{lem-2-1}, we know that
$\eta\in C_c^\infty(\rn)$ and $\supp\eta \subset \{\xi\in\rn:  \     \
\f12 \leq |\xi|\leq 2\}$.  The claim \eqref{3-7} then follows from Lemma \ref{lem-2-2}.

Now, using the claim \eqref{3-7} and the Fefferman-Stein vector-valued maximal inequality (see \cite{fs71}
or \cite{s93}),
we find that
\begin{align*}
\|f\|_{\F}&\ls     \lf\|\lf\{\sum_{j\in\ZZ} 2^{ j\a q}
\lf [  M ( f-B_{\ell, 2^{-j}} f)\r]^q \r\}^{1/q}\r\|_{L^p(\rn)} \\
&\ls \lf \|\lf[\sum_{j\in \ZZ} 2^{j \a q}
| f-B_{\ell, 2^{-j}} f|^q \r]^{\f 1q} \r\|_{L^p(\rn)}\sim |||f|||_{\F(\rn)}.
\end{align*}
This proves the desired conclusion when $\az\in(0,2\ell)$,
$p\in(1,\fz)$ and $q\in(1,\fz]$.

It remains to consider the case that  $\az\in(0,2\ell)$,
$p=\fz$ and $q\in(1,\fz]$. The proof is similar to that of the case $p\in(1,\fz)$
but more subtle. Assume first that
$f\in \dot{F}^\az_{\fz,q}(\rn)$. By an argument similar to the above, in this case, we need to show
 \begin{equation}\label{3-1x}
\sup_{x\in\rn} \sup_{m\in\zz}\lf\{\dashint_{B(x,\,2^{-m})}\sum_{k=m}^\fz2^{k\az q}
 |g(y)-B_{\ell,2^{-k}}g(y)|^q\,dy\r\}^{1/q}\ls \|f\|_{\dot{F}^\az_{\fz,q}(\rn)}.
 \end{equation}
Notice that, if $y\in B(x,2^{-m})$ and $z\in B(y,i2^{-k})$ with $k\ge m$ and $i\in\{1,\ldots,\ell\}$, then $z\in B(x,(\ell+1)2^{-m})$. Then, similar to
\eqref{in}, we know that, for $j\ge k$ and $y\in B(x,2^{-m})$,
\begin{align*}
  |T_{k,j} f(y)|& = |( I- B_{\ell, 2^{-k}} ) ( f\ast \vi_{2^{-j}}) (y)| \\
  &\leq |f \ast \vi_{2^{-j}}(y)| +C_{\ell} \sum _{ i=1}^\ell |B_{i 2^{-k}} ( f\ast \vi_{2^{-j}} )(y)|  \notag\\
  &\ls M (|f\ast \vi_{2^{-j}}|\chi_{B(x,(\ell+1)2^{-m})})(y),\notag
 \end{align*}
which, together with \eqref{3-3} and Lemma \ref{lem-2-3}, implies that
\begin{align}\label{3-4x}
\sum_{k\ge m}  2^{k\a q}\lf|\sum_{ j\ge k} T_{k,j}   f(y)\r|^q&\ls  \sum_{k\ge m}  2^{k\a q}\lf[\sum_{ j\ge k}    M (|f\ast \vi_{2^{-j}}|\chi_{B(x,(\ell+1)2^{-m})})(y) \r]^q\notag\\
&\ls \sum_{j\ge m} 2^{j \a q}
[ M (|f\ast \vi_{2^{-j}}|\chi_{B(x,(\ell+1)2^{-m})})(y)]^q.
\end{align}

When $m\le j<k$, by \eqref{e-m-0}, and using
Remark \ref{rem-e}(i) instead of Lemma \ref{lem-2-2},
we find that, for all  $k\ge m$, integer $l\ge n+1$ and $y\in B(x,2^{-m})$,
$$|T_{k,j} f(y)| \ls 2^{ 2\ell (j-k)} \sum_{i=0}^\fz 2^{-i(l-n)}
M(f_j\chi_{B(x,2^{i-j}+2^{-m})}) (y)$$
and hence, by Lemma \ref{lem-2-3} and the Minkowski inequality, we see that
\begin{align}\label{3-5x}
&\lf\{\sum_{k\ge m}2^{ k\a q}  \lf|\sum_{j=m}^k  T_{k,j} f(y)\r|^q\r\}^{1/q}\notag\\
&\quad\ls \sum_{i=0}^\fz 2^{-i(l-n)}\lf\{\sum_{k\ge m}2^{k(\a-2\ell)q}
\lf[ \sum_{j=m}^k 2^{2\ell j}
M (f_j\chi_{B(x,(2^{i}+1)2^{-m}})(y) \r]^q \r\}^{1/q}\notag\\
&\quad\ls \sum_{i=0}^\fz 2^{-i(l-n)}\lf\{\sum_{j=m}^\fz 2^{j\a q}
[M (f_j\chi_{B(x,(2^{i}+1)2^{-m}})(y)]^q\r\}^{1/q}.
\end{align}

When $j<m\le k$, we invoke \eqref{e-m-0} and the proof of Remark \ref{rem-e}(i)
to find that, for all $y\in\rn$,
\begin{eqnarray}\label{3-6x}
\qquad |T_{k,j}f(y)|&&\le\lf|\int_{|z-y|<2^{-j}} f_j(z)\int_\rn \wz m_{k,j}(2^{-j}\xi) e^{i(x-y)\cdot\xi}\,d\xi\,dz\r|+\lf|\int_{|z-y|\ge 2^{-j}}\cdots\r|\notag\\
&&\ls 2^{2\ell(j-k)}\dashint_{|z-y|<2^{-j}} |f_j(z)|\,dz \notag\\
&&\quad+\int_{|z-y|\ge 2^{-j}} \frac{|f_j(z)|}{|z-y|^{l}}\int_\rn 2^{-jl}
|\nabla^{l} \wz m_{k,j}(2^{-j}\xi)| \,d\xi\,dz \notag\\
&&\ls 2^{2\ell(j-k)}\sum_{i=0}^\fz 2^{-i(l-n)}
\dashint_{|z-y|\sim 2^{i-j}}|f_j(z)|\,dz \notag\\
&&\ls 2^{2\ell(j-k)}\sum_{i=0}^\fz 2^{-i(l-n)} 2^{-j\az}
\|f\|_{\dot{F}^\az_{\fz,q}(\rn)}\notag\\
&&\ls 2^{2\ell(j-k)}2^{-j\az}
\|f\|_{\dot{F}^\az_{\fz,q}(\rn)},
\end{eqnarray}
where $|z-y|\sim 2^{i-j}$ means that $2^{i-j-1}\le |z-y|<2^{i-j}$ and we chose $l>n$.

Combining \eqref{3-3}, \eqref{3-4x}, \eqref{3-5x} and \eqref{3-6x},
and applying the  Minkowski inequality and the  boundedness of
$M$ on $L^q(\rn)$ with $q\in (1,\fz]$, we know that, for all  $m\in\zz$ and $x\in\rn$,
\begin{eqnarray*}
&&\lf\{\dashint_{B(x,\,2^{-m})}\sum_{k=m}^\fz2^{k\az q}
|g(y)-B_{\ell,2^{-k}}g(y)|^q\,dy\r\}^{1/q}\\
&&\quad\ls \lf\{\dashint_{B(x,\,2^{-m})} \sum_{j\ge m} 2^{j \a q}
[M (|f\ast \vi_{2^{-j}}|\chi_{B(x,(\ell+1)2^{-m})})(y)]^q\,dy\r\}^{1/q}\\
&&\quad\quad+\sum_{i=0}^\fz 2^{-i(l-n)}
\lf\{\dashint_{B(x,\,2^{-m})} \sum_{j\ge m} 2^{j\a q}
[M (f_j\chi_{B(x,(2^{i}+1)2^{-m})})(y)]^q\,dy\r\}^{1/q}\\
&&\quad\quad+\lf\{\dashint_{B(x,\,2^{-m})}
\sum_{k\ge m} 2^{k\az q} \lf[\sum_{j\le m-1} 2^{2\ell(j-k)} 2^{-j\az}
\r]^q\,dy\r\}^{1/q}\|f\|_{\dot{F}^\az_{\fz,q}(\rn)}\\
&&\quad\ls  \lf\{\dashint_{B(x,\,(\ell+1)2^{-m})}  \sum_{j\ge m} 2^{j \a q}
|f\ast \vi_{2^{-j}}(y)| \,dy\r\}^{1/q}\\
&&\quad\quad+\sum_{i=0}^\fz 2^{-i(l-n)}2^{in/q}
\lf\{\dashint_{B(x,(2^{i}+1)2^{-m})} \sum_{j\ge m} 2^{j\a q}
|f_j(y)|^q\,dy\r\}^{1/q}+\|f\|_{\dot{F}^\az_{\fz,q}(\rn)}\\
&&\quad \ls\|f\|_{\dot F^\az_{\fz,q}(\rn)},
\end{eqnarray*}
where we took $l>n(1+1/q)$. This proves \eqref{3-1x}.

Finally, the inverse estimate of \eqref{3-1x} is deduced from an argument similar
to that used in the above proof for \eqref{3-1x}, with $\wz m_{k,j}$ and $f_j$ therein replaced by
$\eta:=\f { \wh{\vi}} { A_{\ell}(|\cdot|)}$ and $f -B_{\ell, 2^{-j}} f$,
respectively. This finishes the proof for the case  $\az\in(0,2\ell)$,
$p=\fz$ and $q\in(1,\fz]$, and hence  Theorem \ref{t-bf}.
 \end{proof}

\section{Inhomogeneous Spaces and Further Remarks}\label{s3}
\hskip\parindent
In this section, we first present the inhomogeneous version of Theorem
\ref{t-bf}. As a further generalization, we show that the conclusions of Theorems
\ref{t-bf} and \ref{t-bf-i} remain valid on Euclidean spaces with non-Euclidean metrics.

It is known that, when $p\in(1,\fz)$ and $\az\in(0,\fz)$, then
$B^\az_{p,\,q}(\rn) \cup F^\az_{p,\,q}(\rn) \subset L^p(\rn)$, while when
 $p=\fz$ and $\az\in(0,\fz)$, then
$B^\az_{\fz,\,q}(\rn) \cup F^\az_{\fz,\,q}(\rn) \subset C(\rn)$, where
$C(\rn)$ denotes the set of all complex-valued  uniformly continuous functions
on $\rn$ equipped with the sup-norm;
see, for example,
\cite[Theorem 3.3.1]{st} and \cite[Chapter 2.4, Corollary 2]{rs}.

\begin{theorem}\label{t-bf-i}
Let $\ell\in\nn$ and $\a\in (0, 2\ell)$.

{\rm (i)} Let $q\in(0,\infty]$.
Then $f\in B^{\az}_{p,q}(\rn)$ if and only if $f\in L^p(\rn)$ when $p\in (1,\fz)$ or $f\in C(\rn)$
when $p=\fz$, and
$$|||f|||_{B^\az_{p,q}(\rn)}:=\|f\|_{L^p(\rn)}+\left\{\sum_{k=1}^\fz
2^{k\az q}\|f-B_{\ell,2^{-k}}f\|_{L^p(\rn)}^q\right\}^{1/q}<\fz.$$
Moreover, $|||\cdot|||_{B^\az_{p,q}(\rn)}$ is equivalent to
$\|f\|_{B^\az_{p,q}(\rn)}$.

{\rm (ii)}  Let $p\in(1,\fz]$ and $q\in(1,\infty]$. Then
$f\in F^\az_{p,q}(\rn)$ if and only if $f\in L^p(\rn)$ when $p\in (1,\fz)$ or $f\in C(\rn)$
when $p=\fz$, and $|||f|||_{F^\az_{p,q}(\rn)}<\fz$,
where, when $p\in(1,\fz)$,
$$|||f|||_{F^\az_{p,q}(\rn)}:=\|f\|_{L^p(\rn)}+\left\|\lf\{\sum_{k=1}^\fz
2^{k\az q}|f-B_{\ell,2^{-k}}f|^q\right\}^{1/q}\right\|_{L^p(\rn)}$$
and, when $p=\fz$,
$$|||f|||_{F^\az_{\fz,q}(\rn)}:=\|f\|_{L^\fz(\rn)}+\sup_{x\in\rn}
\sup_{m\ge 1}\lf\{\dashint_{B(x,\,2^{-m})}\sum_{k=m}^\fz2^{k\az q}
 |f(y)-B_{\ell,2^{-k}}f(y)|^q\,dy\r\}^{1/q}.$$
 Moreover, $|||\cdot|||_{F^\az_{p,q}(\rn)}$ is equivalent to
$\|\cdot\|_{F^\az_{p,q}(\rn)}$.
\end{theorem}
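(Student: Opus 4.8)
The plan is to reduce Theorem \ref{t-bf-i} to the already-proven homogeneous case, Theorem \ref{t-bf}, by exploiting the classical relation between homogeneous and inhomogeneous Besov and Triebel-Lizorkin spaces. Recall that, for $\az>0$ and $p\in(1,\fz]$, one has $A^\az_{p,q}(\rn)=L^p(\rn)\cap \dot A^\az_{p,q}(\rn)$ (with $L^p$ replaced by $C(\rn)$ when $p=\fz$), where $A\in\{B,F\}$, with equivalence of norms $\|f\|_{A^\az_{p,q}(\rn)}\sim \|f\|_{L^p(\rn)}+\|f\|_{\dot A^\az_{p,q}(\rn)}$; this is exactly the embedding quoted before the statement of the theorem together with the standard Littlewood--Paley description. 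Since $\|f\|_{A^\az_{p,q}}$ essentially only differs from $\|f\|_{\dot A^\az_{p,q}}$ by replacing the sum over $k\in\zz$ with the sum over $k\in\zz_+$ (plus the $k=0$ term handled by $\Phi$, which is absorbed into the $L^p$-norm), the inhomogeneous quasi-norm is comparable to $\|f\|_{L^p(\rn)}$ plus the ``high-frequency tail'' $\{\sum_{k\ge 1}2^{k\az q}\|\vz_{2^{-k}}\ast f\|_{L^p}^q\}^{1/q}$ of the homogeneous quasi-norm (and analogously for $F$).

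The key observation making the reduction work is that, for $k\ge 1$ (indeed $k\ge 0$), the operator $I-B_{\ell,2^{-k}}$ and the Littlewood--Paley pieces interact only at frequencies $\gtrsim 1$ up to rapidly decaying tails. Concretely, I would run the \emph{same} argument as in the proof of Theorem \ref{t-bf}, writing, for $k\ge 1$,
\begin{equation*}
f-B_{\ell,2^{-k}}f=\sum_{j\in\zz}T_{k,j}f,
\end{equation*}
and splitting $\sum_{j\ge k}$, $\sum_{1\le j<k}$ and $\sum_{j<1}$. The first two sums are controlled, exactly as in \eqref{3-4} and \eqref{3-5}, by $\sum_{j\ge 1}2^{j\az q}[M(f\ast\vz_{2^{-j}})]^q$ and then by the Fefferman--Stein inequality, contributing $\ls\|f\|_{A^\az_{p,q}(\rn)}$. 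For the remaining sum $\sum_{j<1}T_{k,j}f$ (the ``low-frequency'' block absent in the homogeneous case), one uses that $\wh\vz(2^{-j}\cdot)$ lives on frequencies $\sim 2^j\lesssim 1$, so that $\widehat{T_{k,j}f}(\xi)=\wh\vz(2^{-j}\xi)A_\ell(2^{-k}|\xi|)\wh f(\xi)$ is a symbol of size $\ls 2^{2\ell(j-k)}$ acting on $f\ast\Phi$ (or on the $\le 1$ frequency part of $f$); summing in $j<1$ and then in $k\ge 1$ using Lemma \ref{lem-2-3} and $\az<2\ell$ gives a bound by $\ls 2^{-k(2\ell-\az)}\cdot (\text{something})\cdot M(f\ast\Phi)$, whose $\ell^q_k$-$L^p$-norm is $\ls\|f\ast\Phi\|_{L^p(\rn)}\ls\|f\|_{L^p(\rn)}$. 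This handles $|||f|||_{A^\az_{p,q}(\rn)}\ls\|f\|_{A^\az_{p,q}(\rn)}$.

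For the converse, assume $f\in L^p(\rn)$ (or $C(\rn)$ if $p=\fz$) with $|||f|||_{A^\az_{p,q}(\rn)}<\fz$. One reuses the claim \eqref{3-7}, namely $|f\ast\vz_{2^{-j}}(x)|\ls M(f-B_{\ell,2^{-j}}f)(x)$ for $j\ge 1$, which together with Fefferman--Stein gives the high-frequency tail $\{\sum_{j\ge1}2^{j\az q}\|\vz_{2^{-j}}\ast f\|_{L^p}^q\}^{1/q}\ls|||f|||_{A^\az_{p,q}(\rn)}$; combined with $\|f\ast\Phi\|_{L^p(\rn)}\ls\|f\|_{L^p(\rn)}$ this yields $\|f\|_{A^\az_{p,q}(\rn)}\ls|||f|||_{A^\az_{p,q}(\rn)}$, so in particular $f\in A^\az_{p,q}(\rn)$. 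The only subtlety to address carefully is that in the inhomogeneous setting no representative-choosing (Calder\'on reproducing formula) issue arises, since $f\in L^p(\rn)$ is already a genuine function; and the $p=\fz$, $F$-case again requires the localized maximal-function estimates from Remark \ref{rem-e}(i), carried out precisely as in the proof of \eqref{3-1x} but now with the sums restricted to $k\ge m\ge 1$ and the extra low-frequency term estimated against $\|f\|_{L^\fz(\rn)}$.

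The main obstacle I expect is purely bookkeeping: correctly isolating and estimating the new low-frequency block $\sum_{j<1}T_{k,j}f$ that has no analogue in Theorem \ref{t-bf}, and verifying in the $p=\fz$ Triebel--Lizorkin case that this block, after averaging over $B(x,2^{-m})$ and summing the geometric-in-$k$ factors $2^{-k(2\ell-\az)}$, is genuinely bounded by $\|f\|_{L^\fz(\rn)}$ uniformly in $m\ge1$ and $x\in\rn$; this uses $\az<2\ell$ in an essential way. Everything else is a transcription of the homogeneous proof with $\zz$ replaced by $\zz_+$ and the $k=0$ term handled by the bump function $\Phi$.
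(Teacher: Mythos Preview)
Your proposal is correct and follows essentially the same strategy as the paper: reduce to the homogeneous argument via the equivalence $\|f\|_{A^\az_{p,q}}\sim\|f\|_{L^p}+\widetilde{\|f\|}_{A^\az_{p,q}}$ (the paper's \eqref{equi}), then rerun the proof of Theorem \ref{t-bf} with the sums restricted to $k\ge 1$. The paper is terser --- it simply writes ``repeating the proof of Theorem \ref{t-bf}'' for both directions --- whereas you explicitly isolate and estimate the low-frequency block $\sum_{j<1}T_{k,j}f$ (equivalently, $(I-B_{\ell,2^{-k}})(\Phi\ast f)$) by the $2^{-2\ell k}$ gain from Lemma \ref{lem-2-1}, which is exactly the detail hidden in the paper's phrase ``repeating the proof'' when the homogeneous Calder\'on formula is replaced by the inhomogeneous one.
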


\begin{proof} By similarity, we only consider (ii). The proof is similar to that of Theorem \ref{t-bf}, and we mainly describe the difference.
We need to use the following well-known result: when $\az\in(0,\fz)$ and $p,\,q\in(1,\fz]$, then, for all
$f\in F^\az_{p,q}(\rn)$,
\begin{equation}\label{equi}
\|f\|_{F^\az_{p,q}(\rn)}\sim \|f\|_{L^p(\rn)}+\widetilde{\|f\|}_{F^\az_{p,q}(\rn)},
\end{equation}
where $\widetilde{\|f\|}_{F^\az_{p,q}(\rn)}$ is defined as ${\|f\|}_{F^\az_{p,q}(\rn)}$
in Definition \ref{d2.2} with $k\in\zz_+$ and $m\in\zz_+$ therein replaced,
respectively, by $k\in\nn$ and $m\in\nn$ (which can be easily seen from
\cite[Theorem 3.3.1]{st} and \cite[Chapter 2.4, Corollary 2]{rs}).

Assume first that $f\in F^\az_{p,q}(\rn)$. By \cite[Theorem 3.3.1]{st}
and \cite[Chapter 2.4, Corollary 2]{rs}, we know that $f\in L^p(\rn)$
when $p\in (1,\fz)$ or $f\in C(\rn)$
when $p=\fz$.
On the other hand, repeating the proof of Theorem \ref{t-bf}, we see that, when $p\in(1,\fz)$,
$$\left\|\lf\{\sum_{k=1}^\fz
2^{k\az q}|f-B_{\ell,2^{-k}}f|^q\right\}^{1/q}\right\|_{L^p(\rn)}\ls \|f\|_{F^\az_{p,q}(\rn)}$$
and, when $p=\fz$,
$$\sup_{x\in\rn}
\sup_{m\ge 1}\lf\{\dashint_{B(x,\,2^{-m})}\sum_{k=m}^\fz2^{k\az q}
 |f(y)-B_{\ell,2^{-k}}f(y)|^q\,dy\r\}^{1/q}\ls \|f\|_{F^\az_{\fz,q}(\rn)}$$
which show $|||f|||_{{F^\az_{p,q}(\rn)}}\ls \|f\|_{F^\az_{p,q}(\rn)}$.

Conversely, assume that $f\in L^p(\rn)$
when $p\in (1,\fz)$ or $f\in C(\rn)$
when $p=\fz$, and $|||f|||_{F^\az_{p,q}(\rn)}<\fz$.
Again the proof of Theorem \ref{t-bf} shows that, when $p\in(1,\fz)$,
$$\widetilde{\|f\|}_{F^\az_{p,q}(\rn)}\ls \left\|\lf\{\sum_{k=1}^\fz
2^{k\az q}|f-B_{\ell,2^{-k}}f|^q\right\}^{1/q}\right\|_{L^p(\rn)}$$
and, when $p=\fz$,
$$\widetilde{\|f\|}_{F^\az_{\fz,q}(\rn)}\ls\sup_{x\in\rn}
\sup_{m\ge 1}\lf\{\dashint_{B(x,\,2^{-m})}\sum_{k=m}^\fz2^{k\az q}
 |f(y)-B_{\ell,2^{-k}}f(y)|^q\,dy\r\}^{1/q}\ls \|f\|_{F^\az_{\fz,q}(\rn)}.$$
This, together with \eqref{equi}, further implies that $\|f\|_{F^\az_{\fz,q}(\rn)}
\ls |||f|||_{F^\az_{\fz,q}(\rn)}$, and hence finishes the proof of Theorem \ref{t-bf-i}.
\end{proof}

Finally, we point out that the \emph{conclusions of Theorems \ref{t-bf} and \ref{t-bf-i}
are independent of the choice of the metric in $\rn$}.
To be precise, let $\|\cdot\|$ be a norm in $\rn$, which is not necessarily
the usual Euclidean norm. Then $(\RR^n, \|\cdot\|)$ is
a finite dimensional normed vector space with the unit ball
$$K:=\{x\in \RR^n:\   \ \|x\|\leq 1\}.$$
Clearly, $K$ is  a compact and  symmetric convex set in $\RR^n$ satisfying  that $-K=K$ and $B(0, \d_1)\subset K\subset B(0, \d_2)$ for some $\d_1, \d_2\in(0,\fz)$.

For all $\ell\in\NN$, $f\in L^1_\loc(\rn)$ and  $x\in\rn$, define
$$B_{\ell, t, K} f:=\f {-2}{\binom{2\ell}{\ell}}\sum_{j=1}^\ell (-1)^j \binom{2\ell}{\ell-j} B_{jt, K}f(x).$$
Then, we have the following conclusion.

\begin{theorem}\label{t-g}
The conclusions of  Theorems \ref{t-bf} and \ref{t-bf-i} remain valid
with $B_{\ell, t}$ therein replaced by $B_{\ell, t, K}$.
\end{theorem}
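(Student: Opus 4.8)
\textbf{Proof proposal for Theorem \ref{t-g}.}
The plan is to reduce everything to the already-proved Theorems \ref{t-bf} and \ref{t-bf-i} by observing that the only properties of the Euclidean ball average $B_t$ that were used are of a harmonic-analytic, not metric, nature. Concretely, replacing $B(x,t)$ by $x+tK$ amounts to replacing the convolution kernel $I=|B(0,1)|^{-1}\chi_{B(0,1)}$ by $I^K:=|K|^{-1}\chi_K$; the Fourier transform $\wh{I}$ in \eqref{2-2} is then replaced by $\wh{I^K}$, and the multiplier identities \eqref{mq}--\eqref{2-1} continue to hold verbatim with $m_\ell$ replaced by $m_\ell^K(x):=\frac{-2}{\binom{2\ell}{\ell}}\sum_{j=1}^\ell(-1)^j\binom{2\ell}{\ell-j}\wh{I^K}(jx)$. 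The first step is therefore to establish the analogue of Lemma \ref{lem-2-1}: there is a smooth function $A_\ell^K$ on $\rn$ (no longer radial, since $K$ need not be a ball) such that $m_\ell^K(x)=1-A_\ell^K(x)$, with $A_\ell^K$ vanishing to order $2\ell$ at the origin, with $|x|^{-2\ell}A_\ell^K(x)$ bounded above and below on a neighbourhood of $0$ (say on $\delta_1\le|x|\le 2\delta_2$, using $B(0,\delta_1)\subset K\subset B(0,\delta_2)$) and all its derivatives bounded on $\rn$.

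The key computation is the Taylor expansion of $\wh{I^K}$ at the origin. Since $K$ is symmetric ($-K=K$), all odd moments of $\chi_K$ vanish, so $\wh{I^K}(\xi)=1-\frac12 Q(\xi)+O(|\xi|^4)$ where $Q(\xi)=|K|^{-1}\int_K\langle y,\xi\rangle^2\,dy$ is a positive-definite quadratic form. One then checks, exactly as in the combinatorial identity $4^\ell(\sin\frac s2)^{2\ell}=\binom{2\ell}{\ell}+2\sum_{j=1}^\ell(-1)^j\binom{2\ell}{\ell-j}\cos js$ used in the proof of Lemma \ref{lem-2-1}, that the leading term of $1-m_\ell^K(\xi)=A_\ell^K(\xi)$ near $0$ is a positive multiple of $[Q(\xi)]^\ell$, hence comparable to $|\xi|^{2\ell}$; away from $0$ one uses that $\wh{I^K}\in C^\infty$ together with a quantitative estimate $|\wh{I^K}(\xi)|<1$ for $\xi\ne0$ near the relevant annulus (which holds since $\chi_K$ is not supported on a hyperplane). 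Smoothness and the derivative bounds of $|\xi|^{-2\ell}A_\ell^K(\xi)$ follow since $A_\ell^K$ is real-analytic near $0$ and vanishes to order exactly $2\ell$. With this lemma in hand, the multiplier $m_{k,j}$ of \eqref{3-2}--\eqref{e-m-0} is redefined with $A_\ell$ replaced by $A_\ell^K$, and every estimate in the proof of Theorem \ref{t-bf}---the bounds \eqref{in}, \eqref{e-m-0}, the factorization via $\eta=\wh{\vz}/A_\ell^K(\cdot)\in C_c^\infty(\rn\setminus\{0\})$ in \eqref{3-7}, Lemma \ref{lem-2-2} and Remark \ref{rem-e}---goes through unchanged, since none of them used radiality, only the decay/smoothness properties of $A_\ell$ and the fact that $x+tK\subset B(x,\delta_2 t)$ and $B(x,\delta_1 t)\subset x+tK$ for controlling the elementary term $\mathrm{I}$ by the Hardy--Littlewood maximal function.

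The remaining steps are bookkeeping: observe that $P-B_{\ell,t,K}P=0$ for every polynomial $P$ of degree $<2\ell$ (immediate from the moment vanishing, needed so that \eqref{3-3} still holds after inserting the Calder\'on correction polynomials $P_j$), and that all the $\rn$-specific inputs---the embedding $\dot F^\az_{p,q}(\rn)\hookrightarrow L^1_{\mathrm{loc}}$, the Calder\'on reproducing formula, the Fefferman--Stein inequality, and the inhomogeneous equivalence \eqref{equi}---are metric-independent. Hence repeating the proofs of Theorems \ref{t-bf} and \ref{t-bf-i} verbatim, with $A_\ell^K$, $m_\ell^K$, $B_{\ell,t,K}$ in place of $A_\ell$, $m_\ell$, $B_{\ell,t}$, yields Theorem \ref{t-g}. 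The main obstacle is purely the first step: verifying that $A_\ell^K(\xi)$ is genuinely comparable to $|\xi|^{2\ell}$ near the origin and stays strictly below $1$ (equivalently $\wh{I^K}$ stays away from the values producing $m_\ell^K=1$) on the support of $\wh{\vz}(2^{-j}\cdot)$ for the relevant scales---this is where the convexity and symmetry of $K$, rather than any special structure, do the work, and it requires the combinatorial $\sin^{2\ell}$ identity to be replaced by an analysis of the quadratic form $Q$ and its powers.
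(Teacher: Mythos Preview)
Your overall architecture is right---reduce Theorem~\ref{t-g} to an analogue of Lemma~\ref{lem-2-1} for $I^K=|K|^{-1}\chi_K$, then rerun the proofs of Theorems~\ref{t-bf} and~\ref{t-bf-i} verbatim---and this is exactly the paper's route. But your execution of the key lemma has two concrete errors, and you miss the observation that makes it trivial.

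First, the leading term of $A_\ell^K(\xi)$ at $\xi=0$ is \emph{not} a multiple of $[Q(\xi)]^\ell$; it is $\binom{2\ell}{\ell}^{-1}\dashint_K(\xi\cdot u)^{2\ell}\,du$, the $2\ell$-th moment form of $K$, which for $\ell\ge 2$ differs from the $\ell$-th power of the second-moment form. (Your comparability conclusion survives, but the stated reason is wrong.) Second, and more seriously, your argument for the lower bound on $A_\ell^K$ away from the origin via ``$|\wh{I^K}(\xi)|<1$'' does not work: for $\ell\ge 2$ the coefficients $\frac{-2}{\binom{2\ell}{\ell}}(-1)^j\binom{2\ell}{\ell-j}$ alternate in sign, so $|\wh{I^K}(j\xi)|<1$ for each $j$ does not by itself force $m_\ell^K(\xi)<1$. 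The fix---which is precisely what the paper does---is to notice that the trigonometric identity $4^\ell(\sin\frac{s}{2})^{2\ell}=\binom{2\ell}{\ell}+2\sum_{j=1}^\ell(-1)^j\binom{2\ell}{\ell-j}\cos js$ applies \emph{directly}, not in a replaced form: since $K=-K$ one has $\wh{I^K}(x)=\dashint_K\cos(x\cdot u)\,du$, and substituting $s=x\cdot u$ for each fixed $u\in K$ before averaging yields the exact formula
\[
A_\ell^K(x)=\frac{4^\ell}{\binom{2\ell}{\ell}}\dashint_K\Bigl(\sin\frac{x\cdot u}{2}\Bigr)^{2\ell}\,du.
\]
From this integrand being nonnegative, the lower bound $A_\ell^K(x)\gtrsim|x|^{2\ell}$ on $\{|x|\le 4\}$ and the strict positivity on any annulus are immediate (for $x\neq 0$, the set of $u\in K$ with $x\cdot u\not\in 2\pi\ZZ$ has full measure), and the derivative bounds follow by differentiating under the integral. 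So the identity should not be ``replaced by an analysis of the quadratic form $Q$''; it should be applied pointwise in $u$, and then everything you wrote after that step goes through.
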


Since the proof of Theorem \ref{t-g} is essentially similar to the proofs
of Theorems \ref{t-bf}  and \ref{t-bf-i}, we only describe the main differences,
the other details being omitted.

We first observe that
$$(B_{\ell, t, K} f)^{\wedge} (\xi):=m_{\ell,K}(t\xi) \wh{f}(\xi),\quad   \  \xi\in\rn,$$
where
$$m_{\ell,K} (x) := \f {-2}{\binom{2\ell}{\ell}}\sum_{j=1}^\ell (-1)^j \binom{2\ell}{\ell-j} \wh{I_K}(jx), \quad x\in\rn.$$
Similar to the proof of Lemma \ref{lem-2-1}, by means of
the symmetry property of $K$,
a straightforward calculation shows that, for all $x\in\rn$,
\begin{align*}
    m_{\ell,K}(x) &=\dashint_K \f {-2}{\binom{2\ell}{\ell}}\sum_{j=1}^\ell (-1)^j \binom{2\ell}{\ell-j} \cos (jx\cdot y)\, dy=:1-A_{\ell,K}(x),
\end{align*}
where
$$A_{\ell,K}(x):=\f {4^{\ell}}{\binom{2\ell}{\ell}}\dashint_K \lf( \sin \f {x\cdot u}2\r)^{2\ell}\, du;$$
Furthermore, we have the following estimates:
for all $x\in\rn$ with $|x|\leq 4$,
\begin{equation}\label{5-1}
0<C_1\leq \f {A_{\ell,K}(x)}{|x|^{2\ell}}\leq C_2,
\end{equation}
and
\begin{equation}\label{5-2}
|\nabla^i A_{\ell,K}(x)|\leq C \min\{|x|^{2\ell-i},1\},\quad   \ i\in\nn,
\end{equation}
where $C_1$, $C_2$ and $C$ are positive constants independent of $x$. Similar to the proof of Lemma \ref{lem-2-2}, by
\eqref{5-1} and \eqref{5-2},  we observe that
$$\sup_{t\in(0,\fz)} \dashint_K |f(x+ty)|\, dy \ls Mf(x)$$
for all $f\in L^1_\loc(\rn)$ and $x\in\rn$.

Finally, notice that, by the equivalence of norms on finite-dimensional vector spaces,
the spaces $\B(\rn)$, $\F(\rn)$ and their inhomogeneous counterparts
are essentially independent of the
choice of the norm of the underlying space $\rn$. By means of this observation and
using \eqref{5-1}, \eqref{5-2} in place of Lemma \ref{lem-2-1},
we obtain Theorem \ref{t-g} via some arguments similar to those used in
the proofs of Theorems \ref{t-bf} and \ref{t-bf-i}, the details being omitted.

\medskip

\noindent{\bf Acknowledgements.} The authors would like to express their
deep thanks to the referee for his very careful reading and so many useful
comments which do improve the presentation of this article.

\bigskip

\noindent  Dai Feng

\medskip

\noindent  Department of Mathematical and Statistical Sciences,
University of Alberta, Edmonton, AB, T6G 2G1, Canada

\smallskip

\noindent {\it E-mail}: \texttt{fdai@ualberta.ca}

\bigskip

\noindent  Amiran Gogatishvili

\medskip

\noindent Institute of Mathematics of the Academy of Sciences of the Czech Republic,
\v Zitn\'a 25, 115 67 Prague 1, Czech Republic

\smallskip

\noindent {\it E-mail}: \texttt{gogatish@math.cas.cz}

\bigskip

\noindent  Dachun Yang and Wen Yuan (Corresponding author)

\medskip

\noindent  School of Mathematical Sciences, Beijing Normal University,
Laboratory of Mathematics and Complex Systems, Ministry of
Education, Beijing 100875, People's Republic of China

\smallskip

\noindent {\it E-mails}: \texttt{dcyang@bnu.edu.cn} (D. Yang)

\hspace{1.1cm}\texttt{wenyuan@bnu.edu.cn} (W. Yuan)

\end{document}